\newif\ifpdf\ifx\pdfoutput\undefined
\theoremstyle{plain}
\newtheorem{Theorem}{Theorem}[section]
\newcommand\R{\mathbb R}
\def\ba{{\boldsymbol a}}
\def\bG{{\boldsymbol G}}
\definecolor{bluegreen}{rgb}{0,0.75,0.75}
\def\ion{{\text{ion}}}
\def\Na{{\text{Na}}}
\def\K{{\text{K}}}
\def\rand{{\text{rand}}}
\def\Res{{\text{Res}}}
\def\Error{{\text{Error}}}
\def\ext{{\text{ext}}}
\begin{document}

\title[Parameter Identification Problem in the Hodgkin and
Huxley Model]{Parameter Identification Problem in the Hodgkin and
	Huxley Model}

\author[Mandujano Valle, Madureira]{Jemy A. Mandujano Valle, Alexandre L. Madureira}
\address{Departamento de Modelagem Computacional, Laboratório Nacional de Computação Científica, Av. Getúlio Vargas 333, 25651-070 Petrópolis, RJ, Brazil}
\thanks{The second author acknowledges the financial support of the Brazilian funding agency CNPq}

\begin{abstract}

The Hodgkin and Huxley (H-H) model is a nonlinear system of four equations that describes how action potentials in neurons are initiated and propagated, and represents a major advance in the understanding of nerve cells. However, some of the parameters are obtained through a tedious combination of experiments and data tuning. In this paper, we propose the use of an iterative method (Landweber iteration) to estimate some of the parameters in the H-H model, given the membrane electric potential. We provide numerical results showing that the method is able to capture the correct parameters using the measured voltage as data, even in the presence of noise.

\end{abstract}

\date{October 18, 2018}
\maketitle

\section{Introduction.}
In 1952 Hodgkin and Huxley~\cite{H-H1952} used voltage-clamp technique to extract the parameters of the ionic channel model of the squid giant axon.  In the space-clamped version of the H-H model, the membrane electrical potential $V:[0,T]\to\R$ solves   
\begin{equation}\label{equation1}
C_M\dot {V}(t)=I_{\ext}+I_\ion(t)\quad\text{in }(0,T],  
\end{equation}
where $C_M$ is the specific membrane capacitance, $V$ is the membrane potential, $\dot{V}$ is the rate of voltage change (dots denote time derivatives), $I_{\ext}$ is the specific external current applied on the membrane. The specific ionic current  $I_\ion(t)$  is the sum of three currents $(I_\ion(t)=I_{\Na}(t)+I_\K(t)+I_{L}(t))$,
 potassium, sodium and leak currents, satisfying: 
\begin{eqnarray}
I_{\Na}(t)&=&G_{\Na}\;m^a(V,t)\;h(V,t)^b\;(V(t)-E_{\Na});\label{equation2}\\ 
I_\K(t)&=&G_\K\;n^c(V,t)\;(V(t)-E_\K);\label{equation3}\\
I_{L}(t)&=&G_{L}\;(V(t)-E_{L}).\label{equation4}
\end{eqnarray}
The constants $G_{\Na}$, $G_\K$ and $G_L$ are the maximal specific conductance for Na$^+$, K$^+$ and leakage channels, and $E_{\Na}$, $E_\K$, $E_{L}$ are the Nernst equilibrium potentials.
The functions $m$ and $h$ are the activation and inactivation variables for $\Na^+$, and $n$ is the activation function for $K^+$. These functions are unitless gating variables that take values between $0$ and $1$. Also, the exponents $a$, $b$ and $c$ are  positive numbers. The units of the other parameters are in Table \ref{ta1}.

 \begin{table}[ht!]
	{\small	\centering
		\begin{tabular}{|l|l|l|l|l|l|l|l|l|l|}\hline
			\textbf{Parameters }            & \textbf{Units}      & \textbf{Units name}      \\\hline     
			$C_M$                    &$\mu F/cm^2$& microfarad per square centimeter \\\hline	           	
			$V$                    &$m V$       & millivolt   \\\hline
			$\dot V$               &$V/s$       & volts per second   \\\hline
			$I_{\ext}$, $I_{ion}$   &$\mu A/cm^2$& microampere per square centimeter  \\\hline
			$G_{\Na}$, $G_\K$, $G_L$ &$ mS/cm^2$  & millisiemens per square centimeter    \\\hline
			$E_{\Na}$, $E_\K$, $E_L$ &$mV$        & millivolt  \\\hline           
		\end{tabular}
		\caption{Units of the parameters; see \cite{H-H1952}, Table 3. }
		\label{ta1} } 	
\end{table}

The experiments performed by Hodgkin and Huxley \cite{H-H1952} suggest that  $m$, $h$ and $n$ are functions that depend on time and the membrane potential. The exponent $c$ models the number of gating particles on the channel.
 In the case of active $\Na$ currents, experiments suggest that two types of independent gating particles are involved, $a$ activation gates $m$, and $b$ inactivation gates $h$ \cite{ermentrout2003}. In addiction, $m$ $n$ and $h$ satisfy the differential equations:
 \begin{equation}\label{equation5}
   \dot{\mathcal{X}}(V,t)=\alpha_\mathcal{X}(V) (1-\mathcal{X}(V,t))-\beta_\mathcal{X}(V) \mathcal{X}(V,t)
   \quad\text{where }\mathcal{X}=m,n,h.
 \end{equation}
The functions $\alpha_\mathcal{X}$ and $\beta_\mathcal{X}$ depend on the membrane potential and are given by
\begin{equation}\label{e:alphasdef}
\begin{array}{lll}
\alpha_m=\frac{(25-V)/10}{\exp( (25-V)/10)-1},\;\;\; & \alpha_h=0.07\exp(-V/20),\;\;\; & \alpha_\mathsf {n}=\frac{(10-V)/100}{\exp((10-V)/10)-1 },\vspace*{0.2cm}
\\
\beta_m=4\exp(-V/18), & \beta_h=\frac{1}{\exp((30-V)/10)+1},&\beta_\mathsf {n}=0.125\exp(-V/80).
\end{array}
\end{equation}
To equation \eqref{equation1} we add the initial conditions 
\begin{equation}\label{equation6}
V(0)=V_0,\;\;\;m(0)=m_0,\;\;\;n(0)=n_0,\;\;\;h(0)=h_0.
\end{equation}
Thus,~(\ref{equation1}-\ref{equation6}) yield the following system of ordinary differential equation (ODE):
\begin{equation}\label{equation7}
\left \{\begin{array}{l}\displaystyle
C_M\dot {V}=I_{\ext}- G_{\Na}m^ah^b(V-E_{\Na})-G_\K n^c(V-E_\K)-G_{L}(V-E_L)\quad\text{for }t \in (0,T] \vspace*{0.15cm}
\\
\displaystyle \dot{\mathcal{X}} =(1-\mathcal{X})\alpha_\mathcal{X} (V)-\mathcal{X}\beta_\mathcal{X}(V)\quad\text{where }\mathcal{X}=m,n,h\text{ and } t \in (0,T]
\\
V(0)=V_0,\;\;\;\; m(0)=m_0,\;\;\;\; n(0)=n_0,\;\;\;\; h(0)=h_0, 
\end{array} \right.
\end{equation} 
and $C_M$, $I_{\ext}$, $E_{\Na}$, $E_\K$, $E_L$, $m_0$, $n_0$ and $h_0$ are  known.

Given all the parameters, it is possible to find a (theoretical or numerical) solution for~\eqref{equation7}. That is the \emph{direct problem}. In \emph{inverse problems}, one is given the voltage $V$ and has to compute one or more parameters. In this work, we consider two different \emph{inverse} problems. The first one is to obtain the maximum conductances $G_{\Na}$, $G_\K$ and $G_{L}$ given the measurement of the membrane potential. 
For the second problem, the goal is to obtain the exponents $a$, $b$ and $c$, again given the measurement of the membrane potential.
 
Using experimental data from the squid neuron, Hodgkin and Huxley obtained the parameters $a=3$, $b=4$ and $c=1$. Note, however, that other neurons may produce different parameters. 

Besides the Hodgkin and Huxley model, there are simplified models such as the cable equation, FitzHugh-Nagumo and Morris-Lecar models. Wilfrid Rall \cite{rall1977,rall1992-1} developed the use of cable theory in computational neuroscience, as well as passive and active compartmental modeling of the neuron. In a previous paper~\cite{mandujano2018}, the authors determine conductances with nonuniform distribution in the equation of the cable with and without branches, using the Landweber iterative method. See also~\cite{tadi2002,bell2005,avdonin2013,avdonin2015}, for identification of parameters in the cable equation, and~\cite{cox2001-2,cox2004,pavel2012,che2012,pavel2013,tuikina2017} for investigations on inverse problems in FitzHugh-Nagumo and Morris-Lecar models. In \cite{destexhe2007,destexhe2004,vich2017} the authors obtained approximately time-dependent but voltage-independent conductances, given the membrane potential, in a system of three ordinary differential equations (passive membrane equation). For the Hodgkin and Huxley model, the parameters of ionic channels are estimated in~\cite{buhry2011,buhry2012} using evolutionary algorithms.

Inverse problems are said to be \emph{ill-posed}. A problem is ill-posed in the sense of Hadamard~\cite{hadamard2014} if any of the following conditions are not satisfied: there is a solution; the solution is unique; the solution has a continuous dependence on the input data (stability). Here we admit the existence of a single solution to the problem. 
However, stability is not guaranteed. Stability is necessary if we want to ensure that small variations in the data lead to small changes in the solution. Problems of instability can be controlled by regularization methods, in particular the Landweber iterative scheme~\cite{binder1996,chapko2004,hanke1995,neubauer2000}.

This article is outlined as follows. Section~\ref{section2} presents our inverse problems for the H-H model along with some theoretical results, and in Section~\ref{section3} we show numerical results to describe the effectiveness of our strategy. Finally, we include in the Appendices some more technical arguments. 

\section{Inverse Problem  in the H-H model}\label{section2}
In what follows, we describe an abstract formulation of the  Landweber method or Landweber iteration \cite{kaltenbacher2008}. 

Consider~\eqref{equation7} and let  $x=(G_{\Na},G_\K,G_L)\in \mathbb{R}^3$ or $x=(a,b,c)\in \mathbb{R}^3$. Consider also the set of function $L^2(0,T)$, and  the nonlinear operator 
\begin{equation}\label{equa10}
F: \mathbb{R}^3\rightarrow L^2(0,T),
\end{equation}
defined by $F(x)=V$, where $V$ solves~\eqref{equation7}. In practical terms, the data $V$ are obtained by measurements. Therefore, we denote the measurements by $V^\delta$, of the which we assume to know the noise level $\delta$, satisfying
\begin{equation}\label{equa11}
\|V-V^\delta\|_{L^2(0,T)}^2=\int_0^T|V(t)-V^\delta(t)|^2\,dt\le\delta. 
\end{equation}

To obtain an approximation of $x$,  given $V^\delta$,  we used the Landweber iteration
\begin{equation}\label{equation8}
x^{ {k+1},\delta}=x^{ k,\delta}+w^{k,\delta}F'(x^{ k,\delta})^*(V^\delta-F(x^{ k,\delta})), 
\end{equation}
where $F'(x^{k,\delta})$  is the Gateaux-derivative of $F$ computed at $x^{k,\delta}$, and $F'(x^{k,\delta})^*$ is its adjoint. We also define 

\[w^{k,\delta}=\frac{{\|V^\delta-F(x^{k,\delta})\|}^2_{L^2(0,T)}}{{\| F'(x^{ k,\delta})^*(V^\delta-F(x^{ k,\delta}))\|}^2_{\mathbb{R}^3} }.\]

The iteration ~\eqref{equation8} begins with a guess $x^{1,\delta}$ and  stops at the minimum $k_*=k(\delta,V^\delta)$, such that, for a given $\tau>2$ (see \cite{kaltenbacher2008}, equation (2.14) ), 
\begin{equation}\label{equation9}
\|V^\delta-F(x^{ {k_*},\delta})\|_{L^2(0,T)}\le\tau\delta.  
\end{equation}

It is possible to show that, under certain conditions (we assume that is the case), $x^{ {k_*},\delta}$ converges to a solution of $F(x)=V$ as $\delta\to0$; see \cite{kaltenbacher2008} {Theorem 3.22}. 

\subsection{Inverse Problem to obtain conductances in the H-H model}\label{subsection2.1}
The present goal is to estimate the maximum conductances $G_{\Na}$, $G_\K$ and $G_L$ while assuming that~\eqref{equation7} holds. We assume that the exponents are $a=3$, $b=1$, and $c=4$.

We denote our unknown parameters such as $x =\bG = (G_{\Na},G_\K, G_L)$, then from iteration \eqref{equation8} we have 
\begin{equation}\label{equation012}
\bG^{ {k+1},\delta}=\bG^{ k,\delta}+w^{k,\delta}F'(\bG^{ k,\delta})^*(V^\delta-F(\bG^{ k,\delta})). 
\end{equation}

 Given an initial approximation $\bG^{1,\delta}$
 and $V^\delta$, we obtain a regularizing approximation $\bG^{k_{*},\delta}$ for $\bG$, from Landweber iteration \eqref{equation012}. We denote $\bG^{k,\delta}=(G_{\Na}^{k,\delta},G_\K^{k,\delta},G_L^{k,\delta})$.

 In the next theorem, we compute the adjoint of the Gateaux derivative $F'(\bG^{k,\delta})^*$ to optimize from~\eqref{equation012}.
\begin{Theorem}\label{Theorem1}
It follows from~\eqref{equation012} that 
\begin{equation}\label{equa26}
\left( G_{\Na}^{k+1,\delta},G_\K ^{k+1,\delta},G_L ^{k+1,\delta}\right)= \left( G_{\Na}^{k,\delta},G_\K ^{k,\delta},G_L ^{k,\delta}\right)+w^{k,\delta}\left(X_{\Na}^{k,\delta},X_K^{k,\delta},X_L^{k,\delta}\right),
\end{equation}
where 
\[
w^{k,\delta}=\frac{{\|V^\delta-V^{k,\delta}\|}^2_{L^2(0,T)}}{ {\left\|\left(X_{\Na}^{k,\delta},X_K^{k,\delta},X_L^{k,\delta}\right) \right\|}^2_{\mathbb{R}^3 } },
\] 
and
\begin{eqnarray}
X_{\Na}^{k,\delta}&=&\int_0^T  {\left(m^{k,\delta}\right)}^a {\left(h^{k,\delta}\right)}^b(V^{k,\delta}-E_{\Na})U^{k,\delta}\;dt,\label{equa14}
\\ 
X_K^{k,\delta}&=&\int_0^T {\left(n^{k,\delta}\right) }^c(V^{k,\delta}-E_\K) U^{k,\delta}\;dt,\label{equa15}
\\
X_{L}^{k,\delta}&=&\int_0^T {\left(n^{k,\delta}\right) }^c(V^{k,\delta}-E_\K) U^{k,\delta}\;dt.\label{equa16}
\end{eqnarray}
The functions $m^{k,\delta}$, $n^{k,\delta}$, $h^{k,\delta}$ and $V^{k,\delta}$ solve, given $G_{\Na}^{k,\delta}$, $G_\K^{k,\delta}$ and $G_{L}^{k,\delta}$, 
\begin{equation}\label{equati1}
\left \{\begin{array}{l}\displaystyle
C_M\dot{ V}^{k,\delta}=I_{\ext}- G_{\Na}^{k,\delta} {\left(m^{k,\delta}\right)}^a {\left(h^{k,\delta}\right)}^b(V^{k,\delta}-E_{\Na}) -G_\K^{k,\delta}{\left(n^{k,\delta}\right) }^c(V^{k,\delta}-E_\K)\vspace*{0.1cm}
\\
\hspace*{1.5cm} -G_{L}^{k,\delta}(V^{k,\delta}-E_L),\vspace*{0.2cm}
\\
\dot{\mathcal{X}} =(1-\mathcal{X})\alpha_\mathcal{X} (V^{k,\delta})-\mathcal{X}\beta_\mathcal{X}(V^{k,\delta})
\quad\text{for }\mathcal{X}=m^{k,\delta},n^{k,\delta},h^{k,\delta}, \vspace*{0.2cm}
\\
V^{k,\delta}(0)=V_0,\;\;\;m^{k,\delta}(0)=m_0,\;\;\;n^{k,\delta}(0)=n_0,\;\;\;h^{k,\delta}(0)=h_0,
\end{array} \right.
\end{equation}
and $\alpha_\mathcal{X}$, $\beta_\mathcal{X}$ are defined by~\eqref{e:alphasdef}. 
Finally, $U^{k,\delta}$ solve, given $m^{k,\delta}$, $n^{k,\delta}$, $h^{k,\delta}$ and $V^{k,\delta}$,
\begin{equation}\label{ab6.44}
\left \{\begin{array}{l}\displaystyle
C_M\dot {U}^{k,\delta}- \left( G_{\Na}^{k,\delta} {\left(m^{k,\delta}\right)}^a {\left(h^{k,\delta}\right)}^b+G_\K^{k,\delta} {\left(n^{k,\delta}\right)}^c+ G_L^{k,\delta}\right) U^{k,\delta}\vspace*{0.2cm}
\\
-[(1-m^{k,\delta})\alpha'_{m^{k,\delta}}(V^{k,\delta})- m^{k,\delta}\beta'_ {m^{k,\delta}}(V^{k,\delta})]P^{k,\delta}\vspace*{0.2cm}
\\
-[(1- n^{k,\delta})\alpha'_ {n^{k,\delta}}(V^{k,\delta})- n^{k,\delta}\beta_ {n^{k,\delta}}'(V^{k,\delta})]Q^{k,\delta}\vspace*{0.2cm}
\\
-[(1-h^{k,\delta})\alpha'_{h^{k,\delta}}(V^{k,\delta})- h^{k,\delta}\beta'_{h^{k,\delta}}(V^{k,\delta})]R^{k,\delta}={V^\delta}-V^{k,\delta},\vspace*{0.3cm}
\\
\displaystyle \dot P^{k,\delta}-[\alpha_{m^{k,\delta}}(V^{k,\delta})+\beta_{m^{k,\delta}}(V^{k,\delta})]P^{k,\delta} =-aG_{\Na}^{k,\delta}{\left(m^{k,\delta}\right)}^{a-1}{\left(h^{k,\delta}\right)}^b(V^{k,\delta}-E_{\Na})U^{k,\delta},\vspace*{0.3cm}\\
\dot Q^{k,\delta}-[\alpha_{n^{k,\delta}}(V^{k,\delta})+\beta_{n^{k,\delta}}(V^{k,\delta})]Q^{k,\delta}
=-c G_\K^{k,\delta} {\left(n^{k,\delta}\right)}^{c-1}(V^{k,\delta}-E_\K)U^{k,\delta},\vspace*{0.3cm}
\\
\dot R^{k,\delta}-[\alpha_{h^{k,\delta}}(V^{k,\delta})+\beta_{h^{k,\delta}}(V^{k,\delta})]R^{k,\delta}
=-b G_{\Na}^{k,\delta} {\left(m^{k,\delta}\right)}^a {\left(h^{k,\delta}\right)}^{b-1}(V^{k,\delta}-E_{\Na})U^{k,\delta},\vspace*{0.3cm}
\\
U^{k,\delta}(T)=0,\hspace*{0.5cm}  P^{k,\delta}(T)=0,\hspace*{0.5cm}   Q^{k,\delta}(T)=0,\hspace*{0.5cm}   R^{k,\delta}(T)=0.
\end{array} \right.
\end{equation}
As previously mentioned, we assume that the constants  $a$, $b$, $c$, $E_{\Na}$ ,$E_\K$, $E_{L}$, $C_M$, $I_{\ext}$, $m_0$, $n_0$ and $h_0$ are known data. 
\end{Theorem}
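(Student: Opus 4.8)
The plan is to obtain $F'(\bG^{k,\delta})^*$ by a Lagrangian (adjoint-state) computation and then read off \eqref{equa26} directly from the Landweber step \eqref{equation012}. Since the right-hand side of the state system \eqref{equati1} is smooth in $\bG^{k,\delta}$ and in $(V,m,n,h)$, and the rate functions $\alpha_\mathcal{X},\beta_\mathcal{X}$ of \eqref{e:alphasdef} are smooth in $V$, the classical theorem on differentiable dependence of ODE solutions on parameters shows that $F$ is Gateaux differentiable. First I would differentiate \eqref{equati1} with respect to $\bG^{k,\delta}$ in a direction $\bg=(g_{\Na},g_\K,g_L)$, obtaining a linear sensitivity system for the directional derivatives $(\widehat V,\widehat m,\widehat n,\widehat h)$ of $(V^{k,\delta},m^{k,\delta},n^{k,\delta},h^{k,\delta})$, with homogeneous initial data $\widehat V(0)=\widehat m(0)=\widehat n(0)=\widehat h(0)=0$ (the data $V_0,m_0,n_0,h_0$ are independent of $\bG^{k,\delta}$), so that $F'(\bG^{k,\delta})\bg=\widehat V$ by definition. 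The $\widehat V$-equation carries the source $-[\,g_{\Na}(m^{k,\delta})^a(h^{k,\delta})^b(V^{k,\delta}-E_{\Na})+g_\K(n^{k,\delta})^c(V^{k,\delta}-E_\K)+g_L(V^{k,\delta}-E_L)\,]$ together with coupling terms to $\widehat m,\widehat n,\widehat h$, while the three gating equations are linear in $(\widehat m,\widehat n,\widehat h)$ and forced by $\widehat V$ through $\alpha'_\mathcal{X},\beta'_\mathcal{X}$.

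Next I would regard $U^{k,\delta},P^{k,\delta},Q^{k,\delta},R^{k,\delta}$ as the solution of the backward linear system \eqref{ab6.44}; its coefficients are bounded along the (bounded) solution of \eqref{equati1}, so this terminal-value problem is uniquely solvable. The crux is the duality identity $\langle\bg,(X_{\Na}^{k,\delta},X_K^{k,\delta},X_L^{k,\delta})\rangle_{\R^3}=\langle\widehat V,V^\delta-V^{k,\delta}\rangle_{L^2(0,T)}$ for every $\bg\in\R^3$. To prove it I would start from the definitions \eqref{equa14}--\eqref{equa16}, which express the left-hand side as minus the integral of $U^{k,\delta}$ against the source of the $\widehat V$-equation. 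Replacing that source by the remaining terms of the $\widehat V$-sensitivity equation and integrating the $C_M\dot{\widehat V}$-term by parts (the boundary contributions vanish because $\widehat V(0)=0$ and $U^{k,\delta}(T)=0$) converts it into integrals of $\widehat V,\widehat m,\widehat n,\widehat h$ against the adjoint variables. Using the first line of \eqref{ab6.44} to rewrite $C_M\dot U^{k,\delta}-(\ldots)U^{k,\delta}$, and the $P,Q,R$-equations of \eqref{ab6.44} together with the gating-sensitivity equations (one further integration by parts each, again with vanishing boundary terms), every term in $\widehat m,\widehat n,\widehat h$ and every coupling term in $\widehat V$ cancels in pairs, leaving exactly $\langle\widehat V,V^\delta-V^{k,\delta}\rangle_{L^2(0,T)}$.

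Since this holds for all $\bg$ and $F'(\bG^{k,\delta})\bg=\widehat V$, the defining property of the adjoint gives $F'(\bG^{k,\delta})^*(V^\delta-V^{k,\delta})=(X_{\Na}^{k,\delta},X_K^{k,\delta},X_L^{k,\delta})$. Substituting into \eqref{equation012} yields \eqref{equa26}, and the stated $w^{k,\delta}$ follows because $\|V^\delta-F(\bG^{k,\delta})\|_{L^2(0,T)}=\|V^\delta-V^{k,\delta}\|_{L^2(0,T)}$ and $\|F'(\bG^{k,\delta})^*(V^\delta-V^{k,\delta})\|_{\R^3}=\|(X_{\Na}^{k,\delta},X_K^{k,\delta},X_L^{k,\delta})\|_{\R^3}$. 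The main obstacle I anticipate is the sign and cancellation bookkeeping in the two layers of integration by parts: the costates enter \eqref{ab6.44} with signs tuned precisely so that each coupling term generated by the gating variables is matched and annihilated, and getting a single sign wrong---in the forcing of the $P,Q,R$-equations, in the coefficient $G_{\Na}^{k,\delta}(m^{k,\delta})^a(h^{k,\delta})^b+G_\K^{k,\delta}(n^{k,\delta})^c+G_L^{k,\delta}$ of $U^{k,\delta}$, or in the $\alpha'_\mathcal{X},\beta'_\mathcal{X}$ terms---destroys the cancellation and leaves a spurious factor in $(X_{\Na}^{k,\delta},X_K^{k,\delta},X_L^{k,\delta})$. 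A routine secondary point is to record that along the trajectory the gating variables remain in $[0,1]$ and $V^{k,\delta}$ stays bounded, so that $\alpha_\mathcal{X},\beta_\mathcal{X},\alpha'_\mathcal{X},\beta'_\mathcal{X}$ are bounded and all integrals in \eqref{equa14}--\eqref{equa16} and \eqref{ab6.44} are finite.
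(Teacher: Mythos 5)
Your proposal is correct and takes essentially the same approach as the paper's own proof: linearize the state system \eqref{equati1} to get the sensitivity equations with zero initial data, pair them against the backward adjoint system \eqref{ab6.44} by integration by parts (boundary terms vanishing since the sensitivities vanish at $t=0$ and the costates at $t=T$), and use the arbitrariness of the direction in $\mathbb{R}^3$ to identify $F'(\bG^{k,\delta})^*(V^\delta-V^{k,\delta})$ with $\left(X_{\Na}^{k,\delta},X_K^{k,\delta},X_L^{k,\delta}\right)$. The differences are cosmetic---you run the chain of identities from $\langle\bg,(X_{\Na}^{k,\delta},X_K^{k,\delta},X_L^{k,\delta})\rangle_{\mathbb{R}^3}$ toward $\langle\widehat V,V^\delta-V^{k,\delta}\rangle_{L^2(0,T)}$ whereas the paper runs it in the reverse direction, and you supply regularity justifications (differentiable dependence on parameters, solvability of the terminal-value problem) that the paper leaves implicit---and, like the paper's computation in \eqref{ab6.12}, your duality identity actually yields $X_L^{k,\delta}=\int_0^T(V^{k,\delta}-E_L)U^{k,\delta}\,dt$, confirming that \eqref{equa16} as printed is a copy-paste typo.
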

\begin{proof}See Appendix~\ref{AppendixA}. 
\end{proof}
We next describe the computational scheme. 

\begin{algorithm}[H]\label{Algorithm1}
	\KwData{$V^\delta$, $\delta$ and $\tau$}
	\KwResult{Compute an approximation for $\bG$ using Landweber Iteration Scheme}
	Choose $\bG^{1,\delta}$ as an initial approximation for $\bG$\;
	Compute $m^{1,\delta}$, $n^{1,\delta}$, $h^{1,\delta}$  and $V^{1,\delta}$ from ~\eqref{equati1}, replacing $\bG^{k,\delta}$ by $\bG^{1,\delta}$\;
	k=1\;
	\While{$\tau\delta\le{\|V^\delta-V^{k,\delta}\|}_{L^2(0,T)}$}{
		Compute $U^{k,\delta}$ from~\eqref{ab6.44}\;
		Compute $\bG^{k+1,\delta}$ using~\eqref{equa26}\;
		Compute $m^{k+1,\delta}$, $n^{k+1,\delta}$, $h^{k+1,\delta}$  and $V^{k+1,\delta}$ from \eqref{equati1}, replacing $\bG^{k,\delta}$ by $\bG^{k+1,\delta}$\;
		$k\gets k+1$\;
	}
	\caption{ Landweber iteration to obtain  maximal conductances  }
\end{algorithm}

\subsection{Inverse Problem to obtain exponents in the H-H model}\label{subsection2.2}
Assume again that~\eqref{equation7} holds and that $G_{\Na}$, $G_\K$ and $G_L$ are known. The goal of this subsection is to estimate the exponents $a$, $b$ and $c$. Denoting the unknown parameters by $x=\ba=(a,b,c)$ it follows from iteration~\eqref{equation8} that 
\begin{equation}\label{equation017}
\ba^{ {k+1},\delta}=\ba^{ k,\delta}+w^{k,\delta}F'(\ba^{ k,\delta})^*(V^\delta-F(\ba^{ k,\delta})). 
\end{equation}
Given an initial approximation $\ba^{1,\delta}$ and the data $V^\delta$, we obtain a regularizing approximation $\ba^{k_{*},\delta}$ for $\ba$, from the Landweber iteration \eqref{equation017}. Denote $\ba^{k,\delta}=(a^{k,\delta},b^{k,\delta},c^{k,\delta})$. 

In the next Theorem, we compute the adjoint of the Gateaux derivative $F'(\ba^{k,\delta})^*$ from~\eqref{equation017}.
\begin{Theorem}\label{Theorem2}
Consider the iteration~\eqref{equation017}. It follows then that 
\begin{equation}\label{equati.018}
\left(a^{k+1,\delta},b^{k+1,\delta},c^{k+1,\delta}\right)=\left(a^{k,\delta},b^{k,\delta},c^{k,\delta}\right)+w^{k,\delta}\left(X_a^{k,\delta},X_b^{k,\delta},X_c^{k,\delta}\right),
\end{equation}
where $w^{k,\delta}$ satisfies 
\[
w^{k,\delta}=\frac{{\|V^\delta-V^{k,\delta}\|}^2_{ L^2(0,T)}}{ {\left\|\left(X_a^{k,\delta},X_b^{k,\delta},X_c^{k,\delta}\right) \right\|}^2_{\mathbb{R}^3 } },
\] 
and
\begin{eqnarray*}
X_a^{k,\delta}&=&\int_0^T G_{\Na}(V^{k,\delta}-E_{\Na}) {\left(m^{k,\delta}\right)}^{a^{k,\delta}} {\left(h^{k,\delta}\right)}^{b^{k,\delta}}{U^{k,\delta}}\ln(m^{k,\delta})\,dt,\label{equ16}
\\ 
X_b^{k,\delta}&=&\int_0^T G_{\Na}(V^{k,\delta}-E_{\Na}){\left(m^{k,\delta}\right)}^{a^{k,\delta}} {\left(h^{k,\delta}\right)}^{b^{k,\delta}}{U^{k,\delta}}\ln(h^{k,\delta})\,dt\label{equ17},
\\
X_c^{k,\delta}&=&\int_0^T G_\K (V^{k,\delta}-E_\K) {\left(n^{k,\delta}\right)}^{c^{k,\delta}}{U^{k,\delta}}\ln( {n^{k,\delta}})\,dt.
\end{eqnarray*}
The functions $m^{k,\delta}$, $n^{k,\delta}$, $h^{k,\delta}$ and $V^{k,\delta}$ solve
\begin{equation}\label{equati344}
\left \{\begin{array}{l}\displaystyle
C_M\dot{ V}^{k,\delta}=I_{\ext}- G_{\Na} {\left(m^{k,\delta}\right)}^{a^{k,\delta}} {\left(h^{k,\delta}\right)}^{b^{k,\delta}}(V^{k,\delta}-E_{\Na}) -G_\K{\left(n^{k,\delta}\right) }^{c^{k,\delta}}(V^{k,\delta}-E_\K)\vspace*{0.1cm}\\
\hspace*{1.5cm} -G_{L}(V^{k,\delta}-E_L),\vspace*{0.2cm}\\
\displaystyle \dot{\mathcal{X}} =(1-\mathcal{X})\alpha_\mathcal{X} (V^{k,\delta})-\mathcal{X}\beta_\mathcal{X}(V^{k,\delta}); \hspace*{0.4cm}\mathcal{X}=m^{k,\delta},n^{k,\delta},h^{k,\delta}, \vspace*{0.2cm}\\
V^{k,\delta}(0)=V_0;\;\;\;m^{k,\delta}(0)=m_0;\;\;\;n^{k,\delta}(0)=n_0;\;\;\;h^{k,\delta}(0)=h_0,
\end{array} \right.
\end{equation}
where $a^{k,\delta}$, $b^{k,\delta}$ and $c^{k,\delta}$ are given. Also, $U^{k,\delta}$ solve
\begin{equation}\label{equati377}
\left \{\begin{array}{l}\displaystyle
C_M\dot U^{k,\delta}-\left(G_{\Na} {\left(m^{k,\delta}\right)}^{a^{k,\delta}}{\left(h^{k,\delta}\right)}^{b^{k,\delta}}+G_\K {\left(n^{k,\delta}\right)}^{c^{k,\delta}}+G_L\right) {U^{k,\delta}}\\
\hspace*{1.cm} -[(1- {m^{k,\delta}})\alpha'_{m^{k,\delta}}(V^{k,\delta})- m^{k,\delta}\beta'_{m^{k,\delta}}(V^{k,\delta})]{P^{k,\delta}}\\
\hspace*{1.cm} -[(1- n^{k,\delta})\alpha'_{n^{k,\delta}}(V^{k,\delta})- n^{k,\delta}\beta_ {n^{k,\delta}}'(V^{k,\delta})]Q^{k,\delta}\\
\hspace*{1.cm} -[(1-h^{k,\delta})\alpha'_{h^{k,\delta}}(V^{k,\delta})- h^{k,\delta}\beta'_{h^{k,\delta}}(V^{k,\delta})]R^{k,\delta}=V^\delta-V^{k,\delta},\\
\displaystyle {\dot P}^{k,\delta}-[\alpha_{m^{k,\delta}}(V^{k,\delta})+\beta_{m^{k,\delta}}(V^{k,\delta})]P^{k,\delta} =\\\hspace*{1.cm}-a^{k,\delta}G_{\Na} {\left(m^{k,\delta}\right) }^{ a^{k,\delta}-1} {\left(h^{k,\delta}\right)}^{b^{k,\delta}}(V^{k,\delta}-E_{\Na})U^{k,\delta},\vspace*{0.2cm}\\
\displaystyle {\dot Q}^{k,\delta}-[\alpha_{n^{k,\delta}}(V^{k,\delta})+\beta_{n^{k,\delta}}(V^{k,\delta})]Q^{k,\delta} =\\\hspace*{1.cm}-c^{k,\delta}G_\K {\left(n^{k,\delta}\right)}^{c^{k,\delta}-1}(V^{k,\delta}-E_\K)U^{k,\delta},\vspace*{0.2cm}\\
\displaystyle {\dot R}^{k,\delta}-[\alpha_{h^{k,\delta}}(V^{k,\delta})+\beta_{h^{k,\delta}}(V^{k,\delta})]R^{k,\delta} =\\\hspace*{1.cm}-b^{k,\delta}G_{\Na} {\left(m^{k,\delta}\right)}^{a^{k,\delta}} {\left(h^{k,\delta}\right)}^{b^{k,\delta}-1}(V^{k,\delta}-E_{\Na})U^{k,\delta},\vspace*{0.2cm}\\
U^{k,\delta}(T)=0;\hspace*{0.5cm}  P^{k,\delta}(T)=0;\hspace*{0.5cm}   R^{k,\delta}(T)=0;\hspace*{0.5cm}   Q^{k,\delta}(T)=0, 
\end{array} \right.
\end{equation}
given $m^{k,\delta}$, $n^{k,\delta}$, $h^{k,\delta}$ and $V^{k,\delta}$.  The constants  $ G_{\Na}$, $G_\K$, $E_{\Na}$ ,$E_\K$, $E_{L}$, $C_M$, $I_{\ext}$, $m_0$, $n_0$ and $h_0$ are given data. 
\end{Theorem}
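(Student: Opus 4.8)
The plan is to reproduce, for the exponents, the adjoint-state (Lagrange multiplier) argument that establishes Theorem~\ref{Theorem1} for the conductances; the only structural novelty is that here the unknowns $(a,b,c)$ enter the forward system~\eqref{equati344} as exponents, so differentiating the powers $m^a$, $h^b$, $n^c$ produces logarithmic factors. Throughout I suppress the superscripts $k,\delta$, write $(V,m,n,h)$ for the solution of~\eqref{equati344} at the current iterate $\ba=(a,b,c)$, and set $g=V^\delta-V$ for the residual driving the Landweber step~\eqref{equation017}.

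First I would compute the Gateaux derivative $F'(\ba)$ by linearizing~\eqref{equati344} about $\ba$ in a direction $(\delta a,\delta b,\delta c)$. Writing $(v,\mu,\nu,\eta)$ for the resulting variations of $(V,m,n,h)$, differentiation of the three gating equations gives the linear, homogeneously initialized system
\begin{align*}
\dot\mu+(\alpha_m+\beta_m)\mu&=[(1-m)\alpha_m'(V)-m\beta_m'(V)]v,\\
\dot\nu+(\alpha_n+\beta_n)\nu&=[(1-n)\alpha_n'(V)-n\beta_n'(V)]v,\\
\dot\eta+(\alpha_h+\beta_h)\eta&=[(1-h)\alpha_h'(V)-h\beta_h'(V)]v,
\end{align*}
with $v(0)=\mu(0)=\nu(0)=\eta(0)=0$, since the initial data in~\eqref{equati344} do not depend on $\ba$. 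The voltage equation produces the crucial new terms: because $\partial_a m^a=m^a\ln m$, $\partial_b h^b=h^b\ln h$ and $\partial_c n^c=n^c\ln n$, the linearization of $m^ah^b$ and $n^c$ contributes source terms carrying $\ln m$, $\ln h$, $\ln n$. Collecting everything, the $v$-equation reads
\begin{equation*}
C_M\dot v+(G_{\Na}m^ah^b+G_\K n^c+G_L)v+B_m\mu+B_h\eta+B_n\nu=S,
\end{equation*}
where $B_m=aG_{\Na}m^{a-1}h^b(V-E_{\Na})$, $B_h=bG_{\Na}m^ah^{b-1}(V-E_{\Na})$, $B_n=cG_\K n^{c-1}(V-E_\K)$, and
\begin{equation*}
S=-G_{\Na}(V-E_{\Na})m^ah^b(\ln m\,\delta a+\ln h\,\delta b)-G_\K(V-E_\K)n^c\ln n\,\delta c .
\end{equation*}
By definition $F'(\ba)(\delta a,\delta b,\delta c)=v$.

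Next I would characterize the adjoint through the identity $\langle F'(\ba)(\delta a,\delta b,\delta c),g\rangle_{L^2(0,T)}=\langle(\delta a,\delta b,\delta c),F'(\ba)^*g\rangle_{\mathbb{R}^3}$, i.e.\ $\int_0^T v\,g\,dt=X_a\,\delta a+X_b\,\delta b+X_c\,\delta c$. To remove $v,\mu,\nu,\eta$ in favor of the parameters, I multiply the $v$-, $\mu$-, $\nu$- and $\eta$-equations by adjoint variables $U,P,Q,R$, integrate over $(0,T)$, and integrate by parts in time. Imposing the terminal conditions $U(T)=P(T)=Q(T)=R(T)=0$ together with the homogeneous initial data above annihilates every boundary term. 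Requiring the coefficients of $\mu$, $\nu$, $\eta$ to vanish yields exactly the backward equations for $P$, $Q$, $R$ in~\eqref{equati377}, their right-hand sides $-aG_{\Na}m^{a-1}h^b(V-E_{\Na})U$, $-cG_\K n^{c-1}(V-E_\K)U$, $-bG_{\Na}m^ah^{b-1}(V-E_{\Na})U$ coming respectively from the couplings $B_m\mu$, $B_n\nu$, $B_h\eta$; requiring the coefficient of $v$ to reproduce $g$ yields the $U$-equation in~\eqref{equati377}. What remains of the identity expresses $\int_0^T v\,g\,dt$ through the integral $\int_0^T U\,S\,dt$, so that, once the overall sign of the multipliers is fixed (see below), reading off the factors of $\delta a$, $\delta b$, $\delta c$ reproduces precisely the stated $X_a$, $X_b$, $X_c$; the step length $w^{k,\delta}$ is then the one prescribed by the abstract Landweber iteration~\eqref{equation8}.

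I expect the principal difficulty to be bookkeeping rather than analysis. The delicate point is to track signs consistently through the integrations by parts and the backward terminal conditions, and to fix the overall orientation of the multipliers $U,P,Q,R$ (concretely, $U$ must carry the sign opposite to the one first produced by matching the coefficient of $v$, after which the $P,Q,R$ equations and the $X$'s take the stated form) so that the adjoint system comes out exactly as in~\eqref{equati377}. One should also record that the logarithmic factors $\ln m$, $\ln h$, $\ln n$ are well defined because the gating variables stay in $(0,1)$ by~\eqref{equation5}, which is also what guarantees the smooth dependence of the solution of~\eqref{equati344} on the exponents and hence the existence of the Gateaux derivative used above. Since the algebraic skeleton is identical to that of Theorem~\ref{Theorem1}, with the logarithmic test functions replacing the conductance test functions, the remaining computations transfer essentially verbatim.
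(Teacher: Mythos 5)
Your proposal is correct and follows essentially the same route as the paper: linearize the forward system \eqref{equati344} (producing the logarithmic source terms from $\partial_a m^a=m^a\ln m$, etc.), pair the sensitivity variables with the adjoint variables $U,P,Q,R$ of \eqref{equati377}, integrate by parts using the zero initial conditions of the sensitivities and the zero terminal conditions of the adjoints, and read off $X_a$, $X_b$, $X_c$ from the resulting duality identity. The only cosmetic difference is that you \emph{derive} the adjoint system by requiring coefficients to vanish, whereas the paper states \eqref{equati377} and verifies the cancellation (following Appendix~\ref{AppendixA}) --- the same computation read in the opposite direction.
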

\begin{proof}
See Appendix \eqref{AppendixB}. 
\end{proof}

We next describe the computational scheme.

\begin{algorithm}[H]\label{Algorithm2}
	\KwData{$V^\delta$, $\delta$ and $\tau$}
	\KwResult{Compute an approximation for $\ba$ using Landweber Iteration Scheme}
	Choose $\ba^{1,\delta}$ as an initial approximation for $\ba$\;
	Compute $m^{1,\delta}$, $n^{1,\delta}$, $h^{1,\delta}$  and $V^{1,\delta}$ from ~\eqref{equati344}, replacing $\ba^{k,\delta}$ by $\ba^{1,\delta}$\;
	k=1\;
	\While{$\tau\delta\le{\|V^\delta-V^{k,\delta}\|}_{L^2(0,T)}$}{
		Compute $U^{k,\delta}$ from~\eqref{equati377}\;
		Compute $\ba^{k+1,\delta}$ using~\eqref{equati.018}\;
		Compute $m^{k+1,\delta}$, $n^{k+1,\delta}$, $h^{k+1,\delta}$  and $V^{k+1,\delta}$ from~\eqref{equati344}, replacing $\ba^{k,\delta}$ by $\ba^{k+1,\delta}$\;
		$k\gets k+1$\;
	}
	\caption{ Landweber iteration to obtain  exponents.}
\end{algorithm}

\section{Numerical simulation}\label{section3}
To design our numerical experiments, we first choose $x$ ($x=\bG$ or $x=\ba$) and compute $V$ from~\eqref{equation7}. Of course, in practice, 
the  values of $V$ are given by some experimental measurements, and thus subject to experimental/measurement errors. In our examples, for a given $\delta$, the noisy $V^\delta$ is obtained from 
\begin{equation}\label{equ22}
V^\delta(t)=V(t)+V(t) {\rand}_\varepsilon (t), \;\;\;\text{for all  } t \in [0,T]
\end{equation}
where $\rand_\varepsilon$ is  a uniformly distributed random variable taking values in the range $[-\varepsilon,\varepsilon]$, and $\varepsilon=\delta/\|V\|_{L^2(0,T)}$.

Next, given the initial guess $x^{1,\delta}$ and the data $V^\delta$  and $\delta$, we start to recover $x$ using Algorithm~\ref{Algorithm1} (for $x=\bG$) or Algorithm~\ref{Algorithm2} (for $x=\ba$). Note that we have the exact $x$, and we use that to gauge the algorithm performance. 

The absolute error of $V^\delta$ and its approximation $V^{k,\delta}$ defines the residual from 
\begin{equation}\label{equti28}
\Res_{k}= \| V^{\delta}-V^{k,\delta}\|_{L^2(0,T)}=\sqrt{\int_0^L  \left( V^{\delta}(t)-V^{k,\delta}(t)\right)^2dt},\;\;\;k=1,2,\cdots,k_*.
\end{equation}
The percent error of vector $x\in \mathbb{R}^3$ is defined by
\begin{equation}\label{equ23}
\Error_{k}^x={ \frac{\|x-x^{k,\delta}\|_{\mathbb{R}^3}}{\|x\|_{ \mathbb{R}^3}}}\times 100\%,\;\;\;k=1,2,\cdots,k_*. 
\end{equation}

Each step of Algorithm~\ref{Algorithm1} and Algorithm~\ref{Algorithm2} involves solving two ODEs. Of course, there is no analytical solution for those equations, and the use of numerical methods is necessary. We use explicit Euler with a fixed time step $\Delta t$.

In this section we will present two numerical simulations. In Example~\ref{Exa2.1} we estimate the conductances $G_{\Na}$, $G_\K$ and $G_L$, and in Example~\ref{Exa3.1} we estimate the exponents $a$, $b$ and $c$. Our simulation were computed with Matlab R2012b on a Dell PC, running on a Intel(R) Core(TM) i7-4790 CPU @ 3.60GHz with 32 GB of RAM.

See the code in the URL:\url{https://github.com/MandujanoValle/Conductances-HH}, to estimate the conductances $G_{\Na}$, $G_\K$ and $G_L$, and URL:\url{https://github.com/MandujanoValle/Exponents-HH}, to estimate the exponents $a$, $b$ and $c$.

\Example \label{Exa2.1} This example is a particular case from \eqref{equation7}, with values (see~\cite{cooley1966}, page 586): 
$C_M=1\;[\mu F/cm^2]$,  $E_{\Na}=115\;[mV]$, 
$E_\K=-12\;[mV]$, $E_L=10.598\;[mV]$, $G_{\Na}=120\;[mS/cm^2]$, $G_\K=36\;[mS/cm^2]$,  $G_L=0.3\;[mS/cm^2]$,   $I_{\ext}=0\;[\mu A/cm^2]$, $a=3$, $b=1$ and $c=4$. Let  the initial conditions  $V(0)=-25\;[mV]$, $m(0)=0.5$, $n(0)=0.4$ and $h(0)=0.4$. We consider $T=10 \;[mS]$ and $\Delta t=0.02$.  Given $V^\delta$, the goal of this example is to approximate $\bG=(G_{\Na}, G_\K, G_L)\;[mS/cm^2]$. 

First, given $\bG = (120,36,0.3)\;[mS/cm^2]$, we compute $V$ from \eqref{equation7}  . Then, we calculate $V^\delta$ from \eqref{equ22} given $\varepsilon$ (see table~\ref{table1}). Next, we consider $V$ and $\bG$ as unknowns.

In this test we consider the initial guess $\bG^{1,\delta}=(0,0,0)\;[mS/cm^2]$ and $\tau=2.01$. Table~\ref{table1} presents the results for various levels of noise. When $\varepsilon$ decreases, the number of iterations grow resulting in a better approximation for $\bG=(G_{\Na},G_\K,G_L)\;[mS/cm^2]$ and smaller residuals. As expected, the result of the last column is close to $\tau\delta$, related to the stopping criteria~\eqref{equation9}. 

In Figures \ref{Figure1-1}, \ref{Figure1-2} and \ref{Figure1-3}, we plot some results for $\varepsilon=5\%$ (Table 2, line 4). 
 \begin{table}[ht!]
	{\small	\centering
		\begin{tabular}{|l|l|l|l|l|l|l|l|l|l|}\hline
$\varepsilon $  &$k_* $    &$G_{\Na}^{k_*,\delta}$ & $G_\K^{k_*,\delta}$ &$G_L^{k_*,\delta}$& $Error_{k_*}^x$            &$Res_{k_*}$ \\\hline     
$125\%$ &$1$     &$0$      &$0     $ & $0 $     & $100  \;\%$ &$161 $   \\\hline	           	
$25\%$  &$19303$ &$114.08$ &$28.49 $ & $8.1727$ & $9.9 \;\%$  &$49$    \\\hline
$5\%$   &$25012$ &$115.07$ &$30.59 $ & $0.7938$ & $5.8 \;\%$  &$10 $   \\\hline
$1\%$   &$33419$ &$119.10$ &$34.16 $ & $0.3221$ & $1.6 \;\%$  &$2$   \\\hline
$0.2\%$ &$48642$ &$119.82$ &$35.62 $ & $0.3043$ & $0.3 \;\%$  &$0.4$   \\\hline 

		\end{tabular}
		\caption{Numerical results for Example \ref{Exa2.1} for various values of  $\varepsilon$, as in \eqref{equ22}. The second column contains the number of iterations according to  \eqref{equation9}. The third, fourth and fifth columns are the approximations  for $G_{\Na}$, $G_\K$ and $G_L$ respectively. The sixth column is the relative error of $\bG=(G_{\Na},G_\K,G_L)$ according to 
        \eqref{equ23}. The last column is the residue, see   \eqref{equti28}. }
		\label{table1} } 	
\end{table}

\begin{figure}[ht!]
	\centering
	\includegraphics[height=7cm, width=12cm]{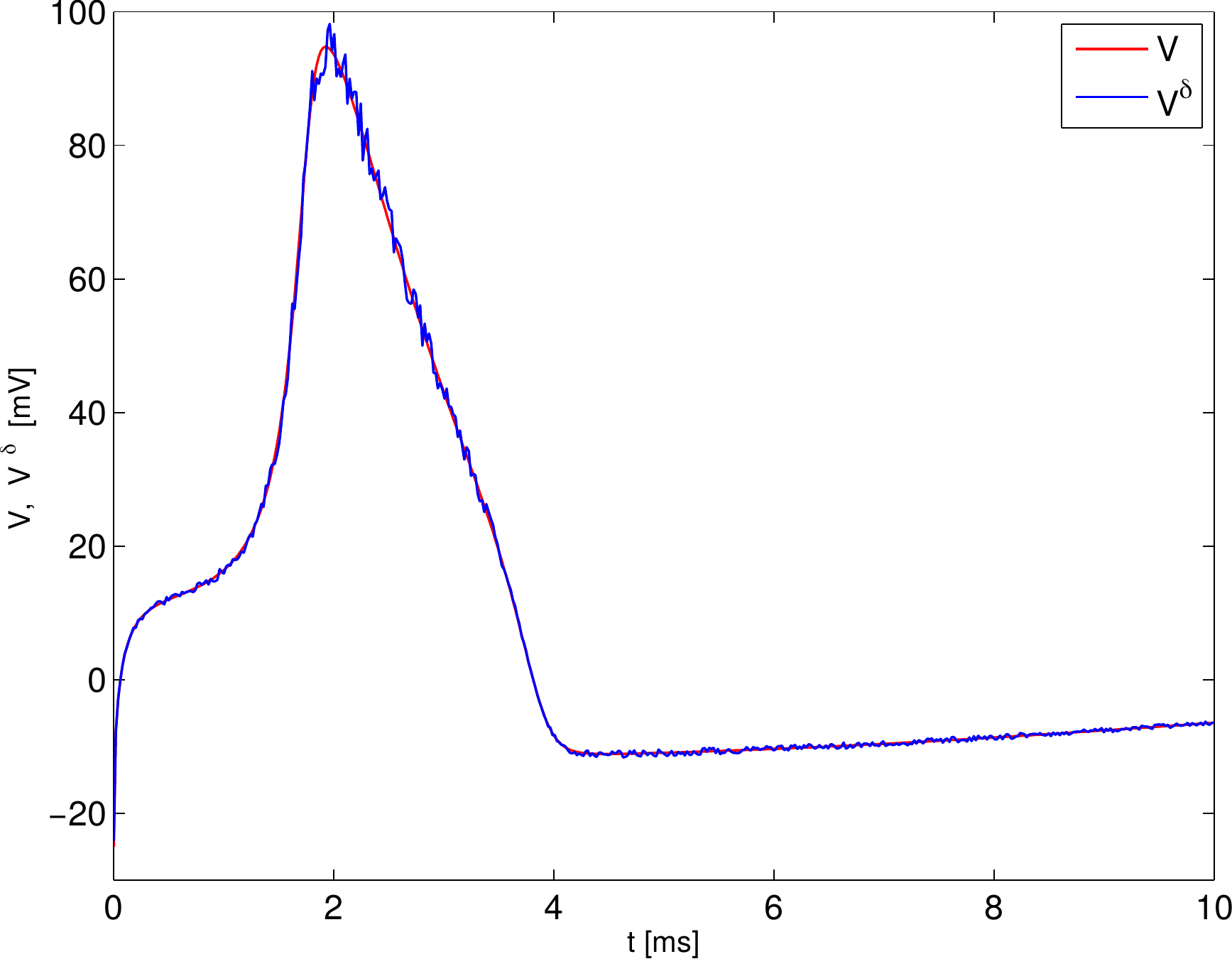}
	\caption{For Example \ref{Exa2.1}. The red line ($V$) is the exact membrane potential  and  blue line ($V^\delta$) is the membrane potential measurement; in this case $\varepsilon=5\%$. }
	\label{Figure1-1}
\end{figure}
\begin{figure}[ht!]
	\centering
	\includegraphics[height=8cm, width=13cm]{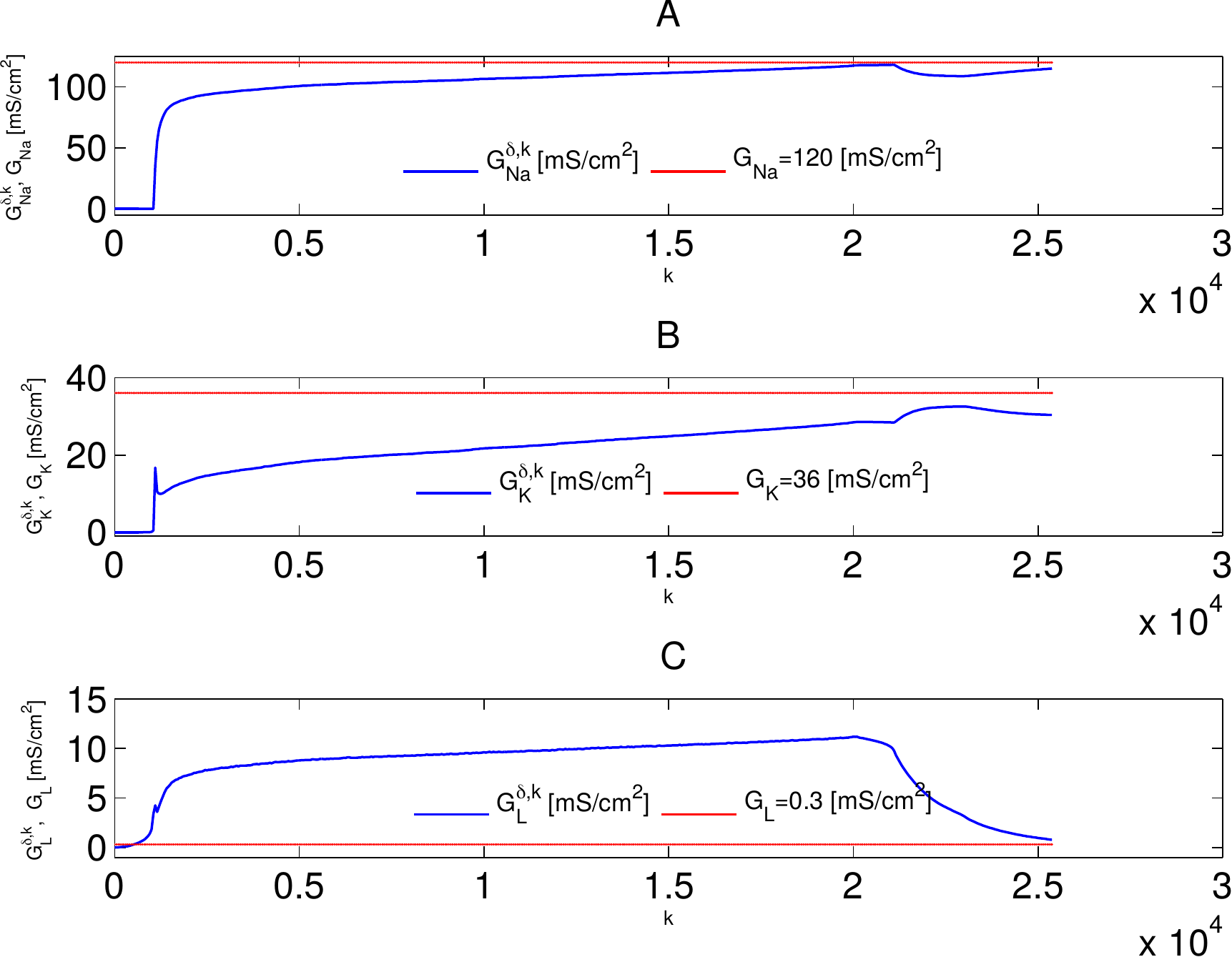}
	\caption{Figures for Example~\ref{Exa2.1} (estimation of the conductances) with $\varepsilon = 5\%$. The x-axis gives the number of iterations ($k$) and the y-axis gives the conductance.
		The red lines are the exact solutions and blue lines are the approximations.
		The figures \ref{Figure1-2}-A, \ref{Figure1-2}-B and \ref{Figure1-2}-C  display the estimates of the maximum conductances of sodium, potassium and leakage, respectively.}   
	\label{Figure1-2}
\end{figure}

\begin{figure}[ht!]
\centering
\includegraphics[height=8cm, width=13cm]{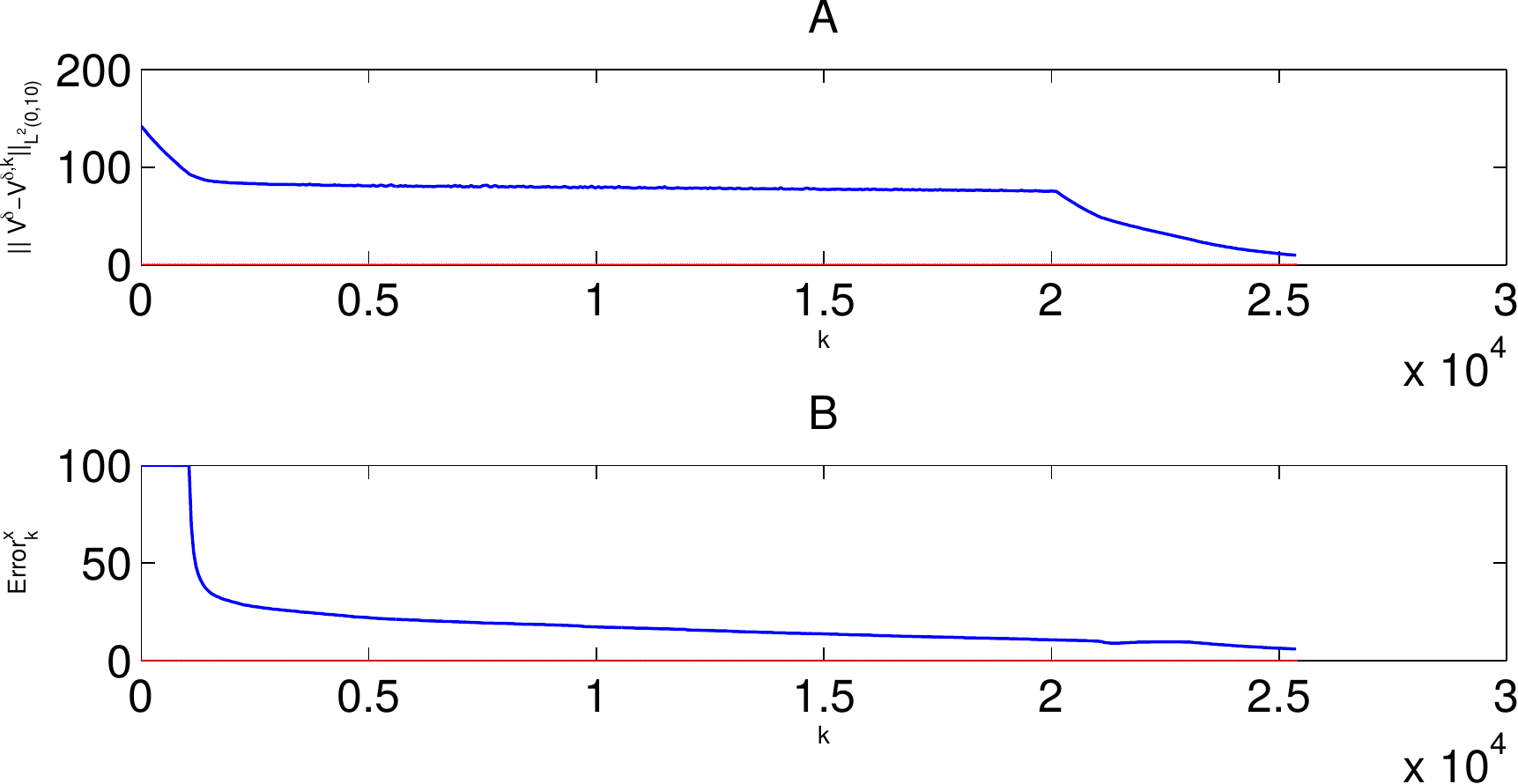}
\caption{Example~\ref{Exa2.1} with $\varepsilon =5\%$. The x-axis indicates the number of iterations ($k$). The y-axis, in the figures $A$ and $B$ are the residual \eqref{equti28} and error \eqref{equ23}, respectively.     
.
}
\label{Figure1-3}
\end{figure}

\Example \label{Exa3.1} This example is another particular case from \eqref{equation7} with values (see~\cite{cooley1966}, page 586): $C_M=1\;[\mu F/cm^2]$,  $E_{\Na}=115\;[mV]$, 
$E_\K=-12\;[mV]$, $E_L=10.598\;[mV]$, $G_{\Na}=120\;[mS/cm^2]$, $G_\K=36\;[mS/cm^2]$, $G_L=0.3\;[mS/cm^2]$
, $I_{\ext}=0\;[\mu A/cm^2]$, $a=3$, $b=1$ and $c=4$. Let the initial conditions  $V(0)=-25\;[mV]$, $m(0)=0.5$, $n(0)=0.4$ and $h(0)=0.4$. We consider the time $T=5\;[ms]$ with $\Delta t=0.02$.  Given $V^\delta$, our goal is to approximate  
$\ba=(a, b, c)=(3,1,4)$. 

First we calculate $V$  from \eqref{equation7}  given $\ba=(3,1,4)$. Then, we calculate $V^\delta$ from 
\eqref{equ22} given $\varepsilon$ (see table \ref{table1}). We then consider $V$ and $\ba$ unknown.
 
In this example we consider the initial guess $\ba^{1,\delta}=(0,0,0)$ and $\tau=2.01$. Table \ref{table3} presents the results for various levels of noise. In figures \ref{Figure2-1}, \ref{Figure2-2} and \ref{Figure2-3}, we plot some results for a level of noise $\varepsilon=1\%$. 

  	\begin{table}[ht!]
	{\small	\centering
		\begin{tabular}{|l|l|l|l|l|l|l|l|l|l|}\hline
$\varepsilon$ & $k_*$ & $a^{ k_*,\delta}$ & $b^{ k_*,\delta}$ &$c^{ k_*,\delta}$ & $Error_{k_*}^x$  &$Res_{k_*}$ \\\hline
$125\;\%$  & $1$      & $0$      & $0$     & $0$      & $ 100\;\%$  & $170$
\\\hline
$25\;\% $  & $11681$  & $1.572$  & $0.496$ & $-0.300$ & $ 89 \;\%$  & $48$
\\\hline
$5\;\%$    & $95605$  & $2.970$  & $0.807$ & $2.626$  & $ 27\;\%$   & $9.7$
\\\hline
$1\;\%$    & $188827$ & $3.008$  & $0.954$ & $3.674$  & $  6\;\%$   & $1.9$
\\\hline
$0.2\;\%$  & $283487$ & $3.002$  & $0.990$ & $3.930$  & $  1.4\;\%$ & $0.4$\\\hline           
		\end{tabular}
		\caption{Numerical results for Example \ref{Exa3.1}. See Table \ref{table1} for a description of the contents. }
		\label{table3}}
\end{table}

\begin{figure}[ht!]
	\centering
	\includegraphics[height=7cm, width=12cm]{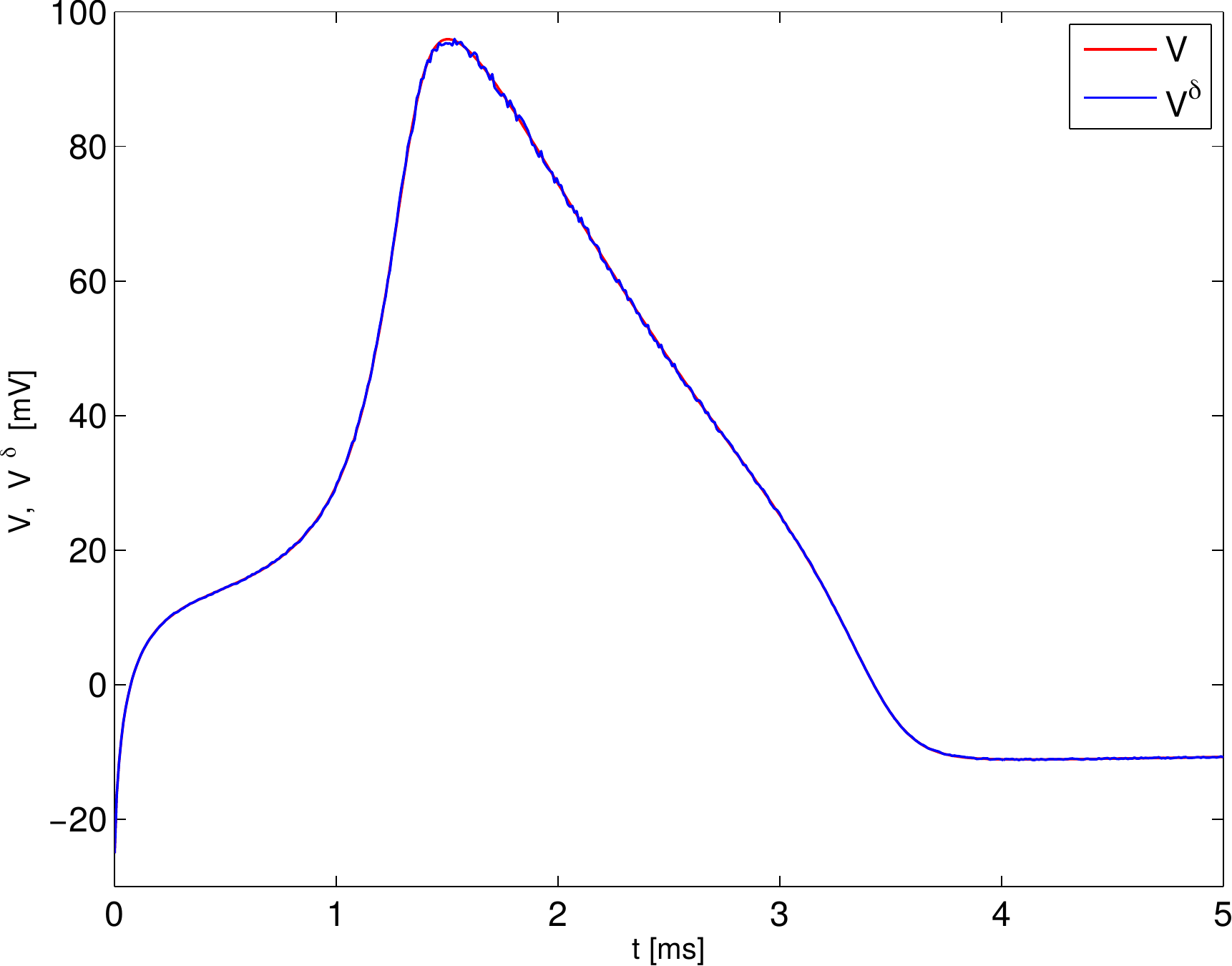}
	\caption{For Example \ref{Exa3.1} and $\varepsilon=1\%$. The red line ($V$) is the exact membrane potential  and  blue line ($V^\delta$) is the membrane potential measurement. }
	\label{Figure2-1}
\end{figure}
\begin{figure}[ht!]
	\centering
	\includegraphics[height=8cm, width=13cm]{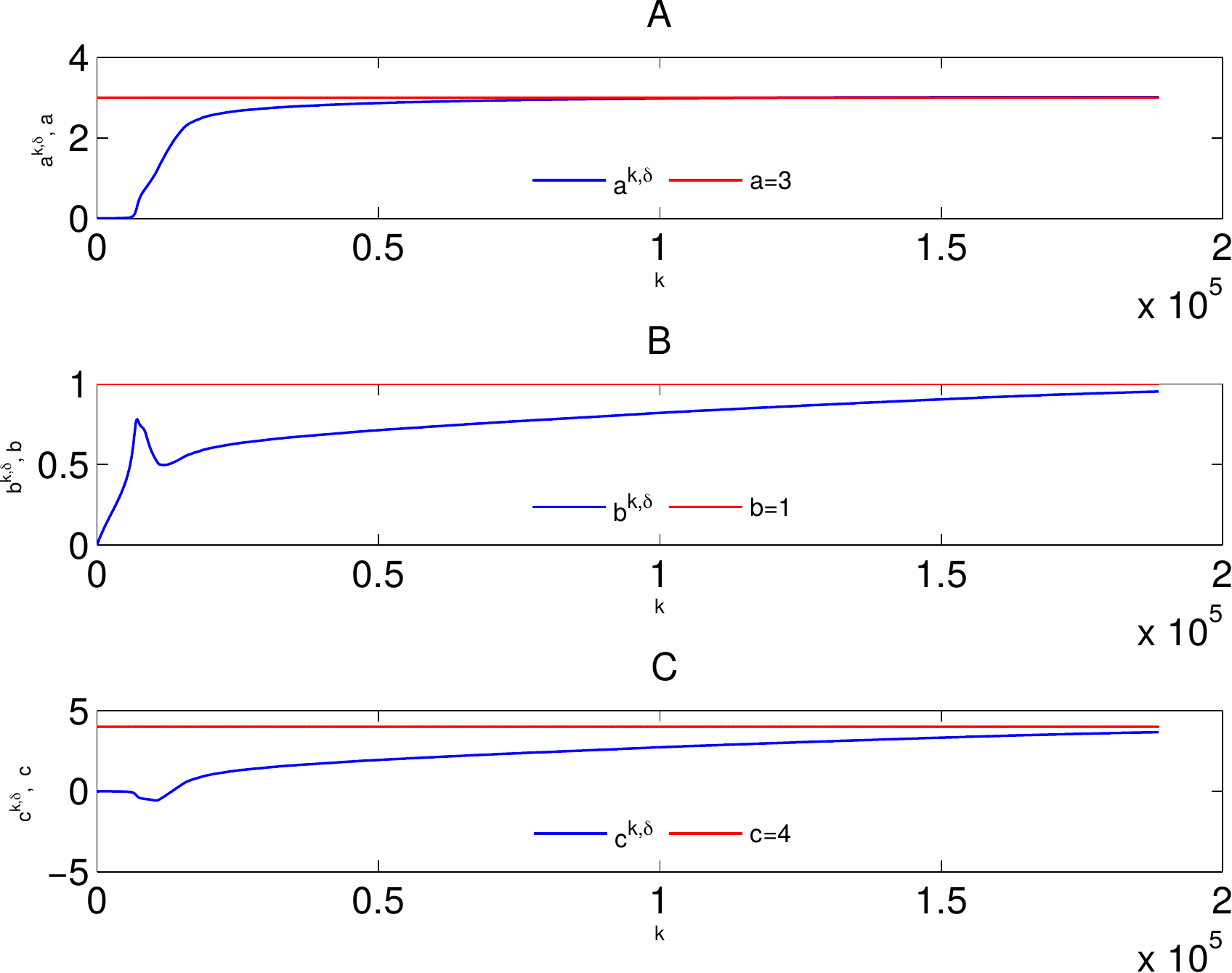}
	\caption{For Example \ref{Exa3.1} and $\varepsilon=1\%$.   The x-axis is the number of iterations ($k$). In  y-axis, the red lines are the exact solutions and blue lines are the approximations.
		The figures \ref{Figure2-2}-A, \ref{Figure2-2}-B and \ref{Figure2-2}-C  are the estimates of $a$, $b$ and $c$, respectively..}
	\label{Figure2-2}
\end{figure}

\begin{figure}[ht!]
	\centering
	\includegraphics[height=8cm, width=13cm]{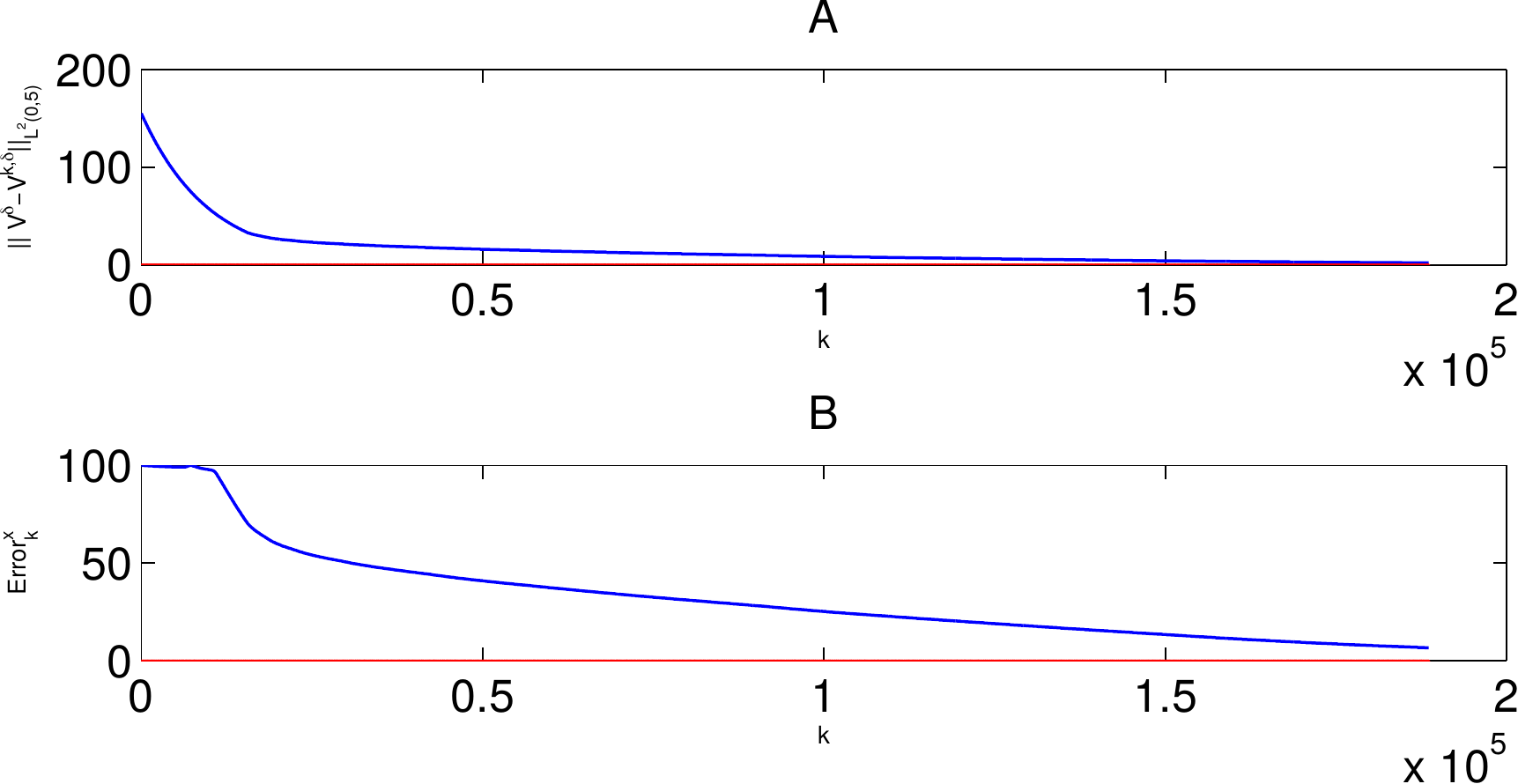}
	\caption{For Example \ref{Exa3.1} and $\varepsilon=1\%$. The x-axis is the number of iterations ($k$). The y-axis, in the figures $A$ and $B$ are the residual \eqref{equti28} and error \eqref{equ23}, respectively.}
	\label{Figure2-3}
\end{figure}

\appendix
\section{Proof of Theorem \ref{Theorem1}}\label{AppendixA}
In this Appendix, we show Theorem~\ref{Theorem1}. 
\begin{proof}
	Consider the operator F defined in \eqref{equa10}.
	Evaluating  $\bG^{k,\delta}$ in $F$ , we have $F(\bG^{k,\delta})=V^{k,\delta}$,  where $V^{k,\delta}$, $m^{k,\delta}$, $n^{k,\delta}$ and $h^{k,\delta}$ solve the ODE \eqref{equati1}.

	Let the vector  $\boldsymbol{\theta}=(\theta_{\Na},\theta_\K,\theta_L)\in \mathbb{R}^3$ and $\lambda \in \mathbb{R}$, then evaluating $\bG^{k,\delta}+\lambda\boldsymbol{\theta}$ in the operator $F$, we have 
	$F(\bG^{k,\delta}+\lambda\boldsymbol{\theta})=V^{k,\delta}_\lambda$, where $V^{k,\delta}_\lambda$, $m^{k,\delta}_\lambda$, $n^{k,\delta}_\lambda$ and $h^{k,\delta}_\lambda$ solve
	\begin{equation}\label{ab6.2}
	\left \{\begin{array}{l}\displaystyle
	C_M\dot{V}^{k,\delta}_\lambda=I_{\ext}
	-         \left(G_{\Na}^{k,\delta}+\lambda\theta_{\Na}\right){\left({m}_\lambda^{k,\delta}\right) }^a{\left({h}_\lambda^{k,\delta}\right)}^b\left(V^{k,\delta}_{\lambda}-E_{\Na}\right)\\\hspace*{0.5cm}-\left(G_\K^{k,\delta}+\lambda\theta_K\right){ \left(n_\lambda^{k,\delta}\right) }^c\left(V^{k,\delta}_{\lambda}-E_\K\right)
	-\left(G_{L}^{k,\delta}+\lambda\theta_L\right)\left(V^{k,\delta}_\lambda-E_L\right),\vspace*{0.2cm}\\
	\displaystyle \dot{\mathcal{X}} =(1-\mathcal{X})\alpha_\mathcal{X} (V^{k,\delta})-\mathcal{X}\beta_\mathcal{X}(V^{k,\delta}); \hspace*{1.4cm}\mathcal{X}=m^{k,\delta}_\lambda,n^{k,\delta}_\lambda,h^{k,\delta}_\lambda, \vspace*{0.2cm}\\
	V_\lambda^{k,\delta}(0)=V_0;\;\;\;\;m_\lambda^{k,\delta}(0)=m_0;\;\;\;\;n_\lambda^{k,\delta}(0)=n_0;\;\;\;\;n_\lambda^{k,\delta}(0)=n_0.
	\end{array} \right.
	\end{equation}
	
	The Gateaux derivative of $F$ at $\bG^{k,\delta}$ in the direction $\boldsymbol{\theta}$ is  given by 
	\begin{equation}\label{equ8}
	W^{k,\delta}=F'(\bG^{k,\delta})(\boldsymbol{\theta})=\lim_{\lambda\to0}\frac{F(\bG^{k,\delta}+\lambda\boldsymbol{\theta})-F(\bG^{k,\delta})}{\lambda}. 
	\end{equation}
	
	Also, we denote the following limits
	\begin{equation} \label{ekl13}
	M^{k,\delta}=\lim_{\lambda \rightarrow 0}\frac{m^{k,\delta}_{\lambda}-m^{k,\delta}}{\lambda},
	\hspace*{0.5cm}N^{k,\delta}=\lim_{\lambda \rightarrow 0}\frac{n^{k,\delta}_{\lambda}-n^{k,\delta}}{\lambda},
	\hspace*{0.5cm}H^{k,\delta}=\lim_{\lambda \rightarrow 0}\frac{h^{k,\delta}_{\lambda}-h^{k,\delta}}{\lambda},
	\end{equation}
	where  $M^{k,\delta}$, $N^{k,\delta}$ and $H^{k,\delta}$ are the Gateaux derivatives of $m^{k,\delta}$, $n^{k,\delta}$ and $h^{k,\delta}$, respectively. 
	
	
	Considering the difference between ODEs \eqref{ab6.2} and \eqref{equati1}, dividing by $\lambda$ and taking the limit $\lambda \rightarrow 0$, we have the following ODE  
	\begin{equation}\label{ab6.3}
	\left \{\begin{array}{l}\displaystyle
	C_M\dot {W} ^{k,\delta}+ \left(G_{\Na}^{k,\delta}{\left( m^{k,\delta}\right)}^a {\left( h^{k,\delta}\right)}^b+G_\K^{k,\delta} {\left( n^{k,\delta} \right)}^c+G_L^{k,\delta}\right)W^{k,\delta}=\vspace*{0.2cm}\\
	\hspace*{0.8cm}-a G_{\Na}^{k,\delta} { \left(m^{k,\delta}\right) }^{a-1} M^{k,\delta} {\left(h^{k,\delta}\right)}^b(V^{k,\delta}-E_{\Na})\vspace*{0.2cm}\\
	\hspace*{0.8cm}-bG_{\Na}^{k,\delta} {\left( m^{k,\delta}\right)}^a {\left(h^{k,\delta}\right)}^{b-1}H^{k,\delta}(V^{k,\delta}-E_{\Na})-cG_\K^{k,\delta} {\left(n^{k,\delta}\right)}^{c-1} N^{k,\delta}(V^{k,\delta}-E_\K)\vspace*{0.2cm}\\
	\hspace*{0.8cm} -\theta_{\Na} {\left(m^{k,\delta}\right)}^a{\left(h^{k,\delta}\right)}^b(V^{k,\delta}-E_{\Na})-\theta_K {\left(n^{k,\delta}\right)}^c(V^{k,\delta}-E_\K) -\theta_L(V^{k,\delta}-E_L),\vspace*{0.3cm} \\
	\displaystyle \dot{\mathcal{X}}+[\alpha_\mathcal{Y}(V^{k,\delta})+\beta_\mathcal{Y}(V^{k,\delta})]\mathcal{X} =[(1-\mathcal{Y})\alpha'_\mathcal{Y}(V^{k,\delta})-\mathcal{Y}\beta'_\mathcal{Y}(V^{k,\delta})]W^{k,\delta};\vspace*{0.2cm} \\ \hspace*{0.2cm}
	(\mathcal{X},\mathcal{Y})=( M^{k,\delta},m^{k,\delta}),(N^{k,\delta},n^{k,\delta}),(H^{k,\delta},h^{k,\delta}),\vspace*{0.3cm}\\
	W^{k,\delta}(0)=0;\hspace*{0.5cm}M^{k,\delta}(0)=0;\hspace*{0.5cm}N^{k,\delta}(0)=0;\hspace*{0.5cm}H^{k,\delta}(0)=0.
	\end{array} \right.
	\end{equation}
This last equation is yet another system of coupled nonlinear differential equations, depending on the parameter $\boldsymbol{\theta}=(\theta_{\Na},\theta_K,\theta_L)$, representing an arbitrary point in $\mathbb{R}^3$.
		
From  Landweber iteration \eqref{equation012} and $\boldsymbol{\theta} \in \mathbb{R}^3$ arbitrary, we have
		\begin{eqnarray*}
		\langle \bG^{k+1,\delta}-\bG^{k,\delta},\boldsymbol{\theta} \;\rangle_{\mathbb{R}^3}
		&=&w^{k,\delta}\langle F'(\bG^{k,\delta})^*(V^\delta-F(\bG^{k,\delta})),\boldsymbol{\theta}\;\rangle_{\mathbb{R}^3},\\
		&=&w^{k,\delta}\langle F'(\bG^{k,\delta})^*(V^\delta-V^{k,\delta}),\boldsymbol{\theta}\;\rangle_{\mathbb{R}^3}.
	\end{eqnarray*}
By  definition of adjoint operator
\[
\langle \bG^{k+1,\delta}-\bG^{k,\delta},\boldsymbol{\theta} \;\rangle_{\mathbb{R}^3}=w^{k,\delta}\langle V^\delta-V^{k,\delta},F'(x_k)(\boldsymbol{\theta})\;\rangle_{L^2[0,T]}, 
\]
where the internal product in  $L^2[0,T]$ is given by $\Phi=\int_0^T ( V^\delta-V^{k,\delta} )W^{k,\delta}\;dt$, and from~\eqref{equ8} and the previous equation,
	\[\langle \bG^{k+1,\delta}-\bG^{k,\delta},\boldsymbol{\theta} \;\rangle_{\mathbb{R}^3}=w^{k,\delta}\langle V^\delta-V^{k,\delta},W^{k,\delta}\rangle_{L^2[0,T]}.\]
Denoting the last equality by  $\Phi$, we gather that 
\begin{equation}\label{ab6.5}
	\Phi=		\frac{\langle \bG^{k+1,\delta}-\bG^{k,\delta},\boldsymbol{\theta} \;\rangle_{\mathbb{R}^3}}{w^{k,\delta}}=\langle V^\delta-V^{k,\delta},W^{k,\delta}\rangle_{L^2[0,T]}.  
	\end{equation}

From the previous equation and  the first equality from ODE \eqref{ab6.44}, we obtain 
	\begin{multline}\label{ab6.6}
	\Phi  =\int_0^T  \left( C_M\dot {U}^{k,\delta}W^{k,\delta}-(G_{\Na}^{k,\delta}{\left( m^{k,\delta}\right)}^a {\left( h^{k,\delta}\right)}^b+G_\K^{k,\delta}{\left( n^{k,\delta}\right)}^c+G_L^{k,\delta})U^{k,\delta}W^{k,\delta}\right) \,dt\\
	-\int_0^T \left[(1-{m^{k,\delta}})\alpha_{m^{k,\delta}}' (V^{k,\delta})-{m^{k,\delta}}\beta_{m^{k,\delta}}'(V^{k,\delta})\right]P^{k,\delta}W^{k,\delta}\,dt \\-\int_0^T\left[(1-{n^{k,\delta}})\alpha_{n^{k,\delta}}' (V^{k,\delta})-{n^{k,\delta}}\beta_{n^{k,\delta}}'(V^{k,\delta})\right]Q^{k,\delta}W^{k,\delta}\,dt\\
	-\int_0^T\left[(1-{h^{k,\delta}})\alpha_{h^{k,\delta}}' (V^{k,\delta})-{h^{k,\delta}}\beta_{h^{k,\delta}}'(V^{k,\delta})\right]R^{k,\delta}W^{k,\delta}\,dt.
	\end{multline}
Integrating the first term from \eqref{ab6.6} by parts, and from the initial $(W^{k,\delta}(0)=0 )$ and final $(U^{k,\delta}(T)=0)$ conditions, we obtain
	\begin{equation}\label{ab6.7}
	\int_0^T   C_M\dot {U}^{k,\delta}W^{k,\delta}= \int_0^T C_MU^{k,\delta}\dot{W}^{k,\delta}.
	\end{equation}
Replacing  equation \eqref{ab6.7} in  \eqref{ab6.6}, we have 
\begin{eqnarray*}
		\Phi &=&-\int_0^T  \left( C_M\dot{W}^{k,\delta}+(G_{\Na}^{k,\delta}{\left( m^{k,\delta}\right)}^a{\left( h^{k,\delta}\right)}^b+G_\K^{k,\delta}{\left( n^{k,\delta}\right)}^c+G_L^{k,\delta})W^{k,\delta}\right) U^{k,\delta}\;dt\\
		& &-\int_0^T \left[(1-{m^{k,\delta}})\alpha_{m^{k,\delta}}' (V^{k,\delta})-{m^{k,\delta}}\beta_{m^{k,\delta}}'(V^{k,\delta})\right]P^{k,\delta}W^{k,\delta}\;dt\\ & &-\int_0^T\left[(1-{n^{k,\delta}})\alpha_{n^{k,\delta}}' (V^{k,\delta})-{n^{k,\delta}}\beta_{n^{k,\delta}}'(V^{k,\delta})\right]{Q^{k,\delta}}W^{k,\delta}\;dt\\
		& &-\int_0^T\left[(1-{h^{k,\delta}})\alpha_{h^{k,\delta}}' (V^{k,\delta})-{h^{k,\delta}}\beta_{h^{k,\delta}}'(V^{k,\delta})\right]R^{k,\delta}W^{k,\delta}\;dt.
\end{eqnarray*}
Replacing, the first equality from the ODE~\eqref{ab6.3}, in the first integral from the previous equation, we gather 
	\begin{multline}\label{ab6.8}
	\Phi=\int_0^T  aG_{\Na}^{k,\delta}{m^{k,\delta}}^{a-1}{M^{k,\delta}}{\left( h^{k,\delta}\right)}^b(V^{k,\delta}-E_{\Na})U^{k,\delta}\;dt
	\\+\int_0^T bG_{\Na}^{k,\delta}{\left( m^{k,\delta}\right)}^a{h^{k,\delta}}^{b-1}\mathsf{H}(V^{k,\delta}-E_{\Na})U^{k,\delta}\;dt
	+\int_0^T cG_\K^{k,\delta}{n^{k,\delta}}^{c-1}\mathsf{N}(V^{k,\delta}-E_\K)U^{k,\delta}\;dt\\
	+\int_0^T {\left( m^{k,\delta}\right)}^a{\left( h^{k,\delta}\right)}^b(V^{k,\delta}-E_{\Na})\alpha U^{k,\delta}\;dt
	+\int_0^T{\left( n^{k,\delta}\right)}^c(V^{k,\delta}-E_\K)\beta U^{k,\delta}\;dt \\
	+\int_0^T (V^{k,\delta}-E_L)\gamma U^{k,\delta}\;dt 
	-\int_0^T \left[(1-{m^{k,\delta}})\alpha_{m^{k,\delta}}' (V^{k,\delta})-{m^{k,\delta}}\beta_{m^{k,\delta}}'(V^{k,\delta})\right]P^{k,\delta}W^{k,\delta}\;dt\\
	-\int_0^T\left[(1-{n^{k,\delta}})\alpha_{n^{k,\delta}}' (V^{k,\delta})-{n^{k,\delta}}\beta_{n^{k,\delta}}'(V^{k,\delta})\right]Q^{k,\delta}W^{k,\delta}\;dt
	\\-\int_0^T\left[(1-{h^{k,\delta}})\alpha_{h^{k,\delta}}' (V^{k,\delta})-{h^{k,\delta}}\beta_{h^{k,\delta}}'(V^{k,\delta})\right]R^{k,\delta}W^{k,\delta}\;dt.
	\end{multline}
Multiplying the second equation from \eqref{ab6.44} by ${M^{k,\delta}}$, and integrating in the interval $[0,T]$ it follows that
	\begin{multline*}
	\int_0^TP^{k,\delta}_t{M^{k,\delta}}-\left[ \alpha_{m^{k,\delta}} (V^{k,\delta})+\beta_{m^{k,\delta}}(V^{k,\delta})\right]P^{k,\delta}M^{k,\delta}\;dt=\\-\int_0^Ta G_{\Na}^{k,\delta}{\left(m^{k,\delta}\right)}^{a-1}{\left( h^{k,\delta}\right)}^b(V^{k,\delta}-E_{\Na})U^{k,\delta}{M^{k,\delta}}\;dt.
	\end{multline*}
Integrating by parts the first term from the previous equation, and using the initial conditions ${M^{k,\delta}}(0)=0$ and $P^{k,\delta}(0)=0$ we have 
	\begin{multline*}
	\int_0^T\left(\dot M^{k,\delta}+\left[ \alpha_{m^{k,\delta}} (V^{k,\delta})+\beta_{m^{k,\delta}}(V^{k,\delta})\right]{M^{k,\delta}}\right)P^{k,\delta}\;dt=\\\int_0^Ta G_{\Na}^{k,\delta}{\left(m^{k,\delta}\right)}^{a-1}{\left( h^{k,\delta}\right)}^b(V^{k,\delta}-E_{\Na})U^{k,\delta}{M^{k,\delta}}\;dt.
	\end{multline*}
Then, from the previous equation and the second equation from ODE \eqref{ab6.3}, for $(\mathcal{X},\mathcal{Y})=({M^{k,\delta}},{m^{k,\delta}})$, 	
\begin{multline}\label{ab6.9}
	\int_0^TaG_\K^{k,\delta}{\left(m^{k,\delta}\right)}^{a-1}{\left( h^{k,\delta}\right)}^b(V^{k,\delta}-E_{\Na})U^{k,\delta}{M^{k,\delta}}\;dt=\\
	\int_0^T\left[(1-{m^{k,\delta}})\alpha_{m^{k,\delta}}' (V^{k,\delta})-{m^{k,\delta}}\beta_{m^{k,\delta}}'(V^{k,\delta})\right]W^{k,\delta} P^{k,\delta}\;dt.
\end{multline}	
Multiplying the third equation from \eqref{ab6.44} by $N^{k,\delta}$, and integrating in the interval $[0,T]$ we gather that 
	\begin{multline*}
	\int_0^T\dot Q^{k,\delta}N^{k,\delta}-\left[ \alpha_{n^{k,\delta}} (V^{k,\delta})+\beta_{n^{k,\delta}}(V^{k,\delta})\right]Q^{k,\delta}N^{k,\delta}\;dt=\\-\int_0^TcG_\K^{k,\delta} {\left(n^{k,\delta}\right)}^{c-1}(V^{k,\delta}-E_\K)U^{k,\delta}\;dt.
	\end{multline*}
Integrating by parts the first term from previous equation, and using the initial conditions 
	$N^{k,\delta}(0)=0$ and $Q^{k,\delta}(0)=0$ we have 
	\begin{multline*}
	\int_0^T\left( \dot N^{k,\delta}+\left[ \alpha_{n^{k,\delta}} (V^{k,\delta})+\beta_{n^{k,\delta}}(V^{k,\delta})\right]{N^{k,\delta}}\right)Q^{k,\delta}\;dt=\\\int_0^TcG_\K^{k,\delta}{\left(n^{k,\delta}\right)}^{c-1}(V^{k,\delta}-E_\K)U^{k,\delta}\;dt.
	\end{multline*}
Then, from the previous equation and the second equation from ODE \eqref{ab6.3}, for $(\mathcal{X},\mathcal{Y})=({N^{k,\delta}},{n^{k,\delta}})$, we have
	\begin{multline}\label{ab6.10}
	\int_0^TcG_\K^{k,\delta} {\left(n^{k,\delta}\right)}^{c-1}(V^{k,\delta}-E_\K)U^{k,\delta}\;dt=\\
	\int_0^T\left[(1-{n^{k,\delta}})\alpha_{n^{k,\delta}}' (V^{k,\delta})-{n^{k,\delta}}\beta_{n^{k,\delta}}'(V^{k,\delta})\right]W Q^{k,\delta}\;dt.
	\end{multline}
	Multiplying the fourth equation from \eqref{ab6.44} by $H^{k,\delta}$, and integrating in the interval $[0,T]$ we gather that
	\begin{multline*}
	\int_0^T\dot R^{k,\delta}H^{k,\delta}-\left[ \alpha_{h^{k,\delta}} (V^{k,\delta})+\beta_{h^{k,\delta}}(V^{k,\delta})\right]R^{k,\delta}H^{k,\delta}\;dt=\\
	-\int_0^Tb G_{\Na}^{k,\delta}{\left( m^{k,\delta}\right)}^a{\left(h^{k,\delta}\right)}^{b-1}(V^{k,\delta}-E_{\Na})U^{k,\delta}\;dt. 
	\end{multline*}
	Integrating by parts the first term from the previous equation, and using the initial conditions $H^{k,\delta}(0)=0$ and $R^{k,\delta}(0)=0$ we have,
	\begin{multline*}
	\int_0^T\left({\dot H^{k,\delta}}+\left[ \alpha_{h^{k,\delta}}(V^{k,\delta})+\beta_{h^{k,\delta}}(V^{k,\delta})\right]{H^{k,\delta}}\right)R^{k,\delta}\;dt=\\\int_0^Tb G_{\Na}^{k,\delta}{\left( m^{k,\delta}\right)}^a{\left(h^{k,\delta}\right)}^{b-1}(V^{k,\delta}-E_{\Na})U^{k,\delta}\;dt.
	\end{multline*}
Then, from the previous equation and the second equation from ODE \eqref{ab6.3}, for $(\mathcal{X},\mathcal{Y})=({H^{k,\delta}},{h^{k,\delta}})$, we have
	
	\begin{multline}\label{ab6.11}
	\int_0^Tb G_{\Na}^{k,\delta}{\left( m^{k,\delta}\right)}^a{\left(h^{k,\delta}\right)}^{b-1}(V^{k,\delta}-E_{\Na})U^{k,\delta}\;dt=\\
	\int_0^T\left[(1-h^{k,\delta})\alpha_{h^{k,\delta}}' (V^{k,\delta})-h^{k,\delta}\beta_{h^{k,\delta}}'(V^{k,\delta})\right]W^{k,\delta} R^{k,\delta}\;dt.
	\end{multline}
Substituting equations \eqref{ab6.9}, \eqref {ab6.10}, and \eqref{ab6.11} in~\eqref{ab6.8}, we have
\begin{multline}\label{ab6.12}
\Phi=\int_0^T {\left( m^{k,\delta}\right)}^a{\left( h^{k,\delta}\right)}^b(V^{k,\delta}-E_{\Na})\theta_{\Na} U^{k,\delta}\;dt
+\int_0^T{\left( n^{k,\delta}\right)}^c(V^{k,\delta}-E_\K)\theta_\K U^{k,\delta}\;dt\\ 
+\int_0^T (V^{k,\delta}-E_L)\theta_{L} U^{k,\delta}\;dt.
\end{multline}
Substituting equations \eqref{equa14}, \eqref{equa15} and \eqref{equa16} in equation \eqref{ab6.12} we gather that 
	\begin{equation}\label{ab6.13}
	\Phi=X_{\Na}^{k,\delta}\;\theta_{\Na}+X_K^{k,\delta}\;\theta_\K+X_L^{k,\delta}\;\theta_{L}=\left\langle  \left( X_{\Na}^{k,\delta},X_K^{k,\delta},X_L^{k,\delta}\right),\left(\theta_{\Na},\theta_\K,\theta_{L}\right)\right\rangle_{\mathbb{R}^3}.
	\end{equation}
From~\eqref{ab6.5} and \eqref{ab6.13} 
	$$		\frac{\langle \bG^{k+1,\delta}-\bG^{k,\delta},\boldsymbol{\theta} \;\rangle_{\mathbb{R}^3}}{w^{k,\delta}} =\left\langle  \left( X_{\Na}^{k,\delta},X_K^{k,\delta},X_L^{k,\delta}\right),\boldsymbol{\theta}\right\rangle_{\mathbb{R}^3}.$$
Since $\boldsymbol{\theta} \in \mathbb{R}^3$ is arbitrary, we obtain~\eqref{equa26}. 
\end{proof}

\section{Proof of Theorem \ref{Theorem2}}\label{AppendixB}
In what follows we prove Theorem~\ref{Theorem2}. 
	\begin{proof}
			Consider the operator F defined in \eqref{equa10}. Evaluating  $\ba^{k,\delta}$ in  $F$, we have $F(\ba^{k,\delta})=V^{k,\delta}$,  where $V^{k,\delta}$, $m^{k,\delta}$, $n^{k,\delta}$ and $h^{k,\delta}$ solve ODE \eqref{equati344}. Let the $\boldsymbol{\theta}=(\theta_{a},\theta_{b},\theta_{c})\in \mathbb{R}^3$ and $\lambda \in \mathbb{R}$, then $F(\ba^{k,\delta}+\lambda\boldsymbol{\theta})=V^{k,\delta}_\lambda$, where $V^{k,\delta}_\lambda$, $m^{k,\delta}_\lambda$, $n^{k,\delta}_\lambda$ and $h^{k,\delta}_\lambda$ solve
\begin{equation}\label{equati35}
\left \{\begin{array}{l}
C_M\dot{V}^{k,\delta}_\lambda=I_{\ext}-G_{\Na}{\left({m}_\lambda^{k,\delta}\right) }^{a^{k,\delta}+\lambda\theta_a}{\left({h}_\lambda^{k,\delta}\right)}^{b^{k,\delta}+\lambda\theta_b}\left(V^{k,\delta}_{\lambda}-E_{\Na}\right)
\\
\hspace*{0.5cm}-G_\K^{k,\delta}{\left(n_\lambda^{k,\delta}\right)}^{c^{k,\delta}+\lambda\theta_c}\left(V^{k,\delta}_{\lambda}-E_\K\right)-G_{L}\left(V^{k,\delta}_\lambda-E_L\right),\vspace*{0.2cm}
\\
\dot{\mathcal{X}} =(1-\mathcal{X})\alpha_\mathcal{X} (V^{k,\delta})-\mathcal{X}\beta_\mathcal{X}(V^{k,\delta}), \quad\text{for }\mathcal{X}=m^{k,\delta}_\lambda,n^{k,\delta}_\lambda,h^{k,\delta}_\lambda, \vspace*{0.2cm}
\\		V_\lambda^{k,\delta}(0)=V_0,\;\;\;\;m_\lambda^{k,\delta}(0)=m_0,\;\;\;\;n_\lambda^{k,\delta}(0)=n_0,\;\;\;\;n_\lambda^{k,\delta}(0)=n_0.
		\end{array} \right.
		\end{equation}
Considering the difference between the ODEs~\eqref{equati35} and~\eqref{equati344}, dividing by $\lambda$ and taking the limit $\lambda \rightarrow 0$, we have the ODE  		
		\begin{equation}\label{equati36}
		\left \{\begin{array}{l}\displaystyle
		C_M{\dot {W}}^{k,\delta}+\left(G_{\Na}{\left(m^{k,\delta}\right)}^{a^{k,\delta}} {\left(h^{k,\delta}\right)}^{b^{k,\delta}}+G_\K{\left(n^{k,\delta}\right)}^{c^{k,\delta}}+G_L\right)W^{k,\delta}=\\
		\hspace*{0.5cm}-a^{k,\delta} G_{\Na} {\left(m^{k,\delta}\right)}^{a^{k,\delta}-1}M^{k,\delta}{\left(h^{k,\delta}\right)}^{b^{k,\delta}}(V^{k,\delta}-E_{\Na})\\
		\hspace*{0.5cm}-\mathsf{b}G_{\Na} {\left(m^{k,\delta}\right)}^{a^{k,\delta}} {\left(h^{k,\delta}\right)}^{b^{k,\delta}-1}H^{k,\delta}(V^{k,\delta}-E_{\Na})\\
		\hspace*{0.5cm} -c^{k,\delta}G_\K {\left(n^{k,\delta}\right)}^{c^{k,\delta}-1}N^{k,\delta}(V^{k,\delta}-E_\K)\\
		\hspace*{0.5cm} -G_{\Na} {\left(m^{k,\delta}\right)}^{a^{k,\delta}}\ln( m^{k,\delta}) {\left(h^{k,\delta}\right)}^{b^{k,\delta}}(V^{k,\delta}-E_{\Na})\theta_a\\
		\hspace*{0.5cm}-G_{\Na}{\left(m^{k,\delta}\right)}^{a^{k,\delta}} {\left(h^{k,\delta}\right)}^{b^{k,\delta}}\ln( h^{k,\delta})(V^{k,\delta}-E_{\Na})\theta_b\\
		\hspace*{0.5cm}  -G_k{\left(n^{k,\delta}\right)}^\mathsf{c}\ln(n^{k,\delta})(V^{k,\delta}-E_\K)\theta_c,\vspace*{0.2cm}
\\
\dot{\mathcal{X}}+[\alpha_\mathcal{Y}(V^{k,\delta})+\beta_\mathcal{Y}(V^{k,\delta})]\mathcal{X} =[(1-\mathcal{Y})\alpha'_\mathcal{Y}(V^{k,\delta})-\mathcal{Y}\beta'_\mathcal{Y}(V^{k,\delta})]{W^{k,\delta}},
\\ \hspace*{0.2cm}(\mathcal{X},\mathcal{Y})=({M^{k,\delta}},m^{k,\delta}),(N^{k,\delta},n^{k,\delta}),(H^{k,\delta},h^{k,\delta}),\vspace*{0.2cm}\\
		W^{k,\delta}(0)=0,\;\;\;M^{k,\delta}(0)=0,\;\;\;N^{k,\delta}(0)=0,\;\;\;H^{k,\delta}(0)=0.
		\end{array} \right.
		\end{equation}
where $W^{k,\delta}$ is defined in equation \eqref{equ8} by replacing $\bG^{k,\delta}$ by $\ba^{k,\delta}$. Also, $M^{k,\delta}$, $N^{k,\delta}$ and $H^{k,\delta}$ are defined in equation \eqref{ekl13}. 

This last equation is again a system of coupled nonlinear differential equations, parametrized by  $\boldsymbol{\theta}=(\theta_a,\theta_b,\theta_c)$, where $\boldsymbol{\theta}\in\mathbb{R}^3$ is arbitrary. Considering~\eqref{equati377}, and proceeding as in Appendix~\ref{AppendixA}, we gather~\eqref{equati.018}. 
\end{proof}

\bibliographystyle{acm}
\bibliography{HHinverse}

\begin{thebibliography}{10}

\bibitem{avdonin2013}
{\sc Avdonin, S., and Bell, J.}
\newblock Determining a distributed parameter in a neural cable model via a
  boundary control method.
\newblock {\em Journal of mathematical biology 67}, 1 (2013), 123--141.

\bibitem{avdonin2015}
{\sc Avdonin, S., and Bell, J.}
\newblock Determining a distributed conductance parameter for a neuronal cable
  model defined on a tree graph.
\newblock {\em Journal of Inverse Problems and Imaging 9\/} (2015), 645--659.

\bibitem{bell2005}
{\sc Bell, J., and Craciun, G.}
\newblock A distributed parameter identification problem in neuronal cable
  theory models.
\newblock {\em Mathematical biosciences 194}, 1 (2005), 1--19.

\bibitem{binder1996}
{\sc Binder, A., Hanke, M., and Scherzer, O.}
\newblock On the landweber iteration for nonlinear ill-posed problems.
\newblock {\em Journal of Inverse and Ill-posed Problems 4}, 5 (1996),
  381--390.

\bibitem{buhry2011}
{\sc Buhry, L., Grassia, F., Giremus, A., Grivel, E., Renaud, S., and
  Sa{\"\i}ghi, S.}
\newblock Automated parameter estimation of the hodgkin-huxley model using the
  differential evolution algorithm: application to neuromimetic analog
  integrated circuits.
\newblock {\em Neural computation 23}, 10 (2011), 2599--2625.

\bibitem{buhry2012}
{\sc Buhry, L., Pace, M., and Sa{\"\i}ghi, S.}
\newblock Global parameter estimation of an hodgkin--huxley formalism using
  membrane voltage recordings: Application to neuro-mimetic analog integrated
  circuits.
\newblock {\em Neurocomputing 81\/} (2012), 75--85.

\bibitem{chapko2004}
{\sc Chapko, R., and K{\"u}gler, P.}
\newblock A comparison of the landweber method and the gauss--newton method for
  an inverse parabolic boundary value problem.
\newblock {\em Journal of computational and applied mathematics 169}, 1 (2004),
  183--196.

\bibitem{che2012}
{\sc Che, Y., Geng, L.-H., Han, C., Cui, S., and Wang, J.}
\newblock Parameter estimation of the fitzhugh-nagumo model using noisy
  measurements for membrane potential.
\newblock {\em Chaos: An Interdisciplinary Journal of Nonlinear Science 22}, 2
  (2012), 023139.

\bibitem{cooley1966}
{\sc Cooley, J., and Dodge~Jr, F.}
\newblock Digital computer solutions for excitation and propagation of the
  nerve impulse.
\newblock {\em Biophysical journal 6}, 5 (1966), 583.

\bibitem{cox2004}
{\sc Cox, S., and Wagner, A.}
\newblock Lateral overdetermination of the fitzhugh--nagumo system.
\newblock {\em Inverse Problems 20}, 5 (2004), 1639.

\bibitem{cox2001-2}
{\sc Cox, S.~J., and Ji, L.}
\newblock Discerning ionic currents and their kinetics from input impedance
  data.
\newblock {\em Bulletin of mathematical biology 63}, 5 (2001), 909--932.

\bibitem{ermentrout2003}
{\sc Gutkin, B., Pinto, D., and Ermentrout, B.}
\newblock Mathematical neuroscience: from neurons to circuits to systems.
\newblock {\em Journal of Physiology-Paris 97}, 2-3 (2003), 209--219.

\bibitem{hadamard2014}
{\sc Hadamard, J.}
\newblock {\em Lectures on Cauchy's problem in linear partial differential
  equations}.
\newblock Courier Corporation, 2014.

\bibitem{hanke1995}
{\sc Hanke, M., Neubauer, A., and Scherzer, O.}
\newblock A convergence analysis of the landweber iteration for nonlinear
  ill-posed problems.
\newblock {\em Numerische Mathematik 72}, 1 (1995), 21--37.

\bibitem{H-H1952}
{\sc Hodgkin, A.~L., and Huxley, A.~F.}
\newblock A quantitative description of membrane current and its application to
  conduction and excitation in nerve.
\newblock {\em The Journal of physiology 117}, 4 (1952), 500--544.

\bibitem{kaltenbacher2008}
{\sc Kaltenbacher, B., Neubauer, A., and Scherzer, O.}
\newblock {\em Iterative regularization methods for nonlinear ill-posed
  problems}, vol.~6.
\newblock Walter de Gruyter, 2008.

\bibitem{neubauer2000}
{\sc Neubauer, A.}
\newblock On landweber iteration for nonlinear ill-posed problems in hilbert
  scales.
\newblock {\em Numerische Mathematik 85}, 2 (2000), 309--328.

\bibitem{pavel2013}
{\sc Pavel'chak, I.}
\newblock Numerical solution method for the source reconstruction problem in
  the fitzhugh--nagumo model.
\newblock {\em Computational Mathematics and Modeling 1}, 24 (2013), 22--30.

\bibitem{pavel2012}
{\sc Pavel'chak, I., and Tuikina, S.}
\newblock Numerical solution method for the inverse problem of the modified
  fitzhugh--nagumo model.
\newblock {\em Computational Mathematics and Modeling 23}, 2 (2012), 208--215.

\bibitem{destexhe2007}
{\sc Pospischil, M., Piwkowska, Z., Rudolph, M., Bal, T., and Destexhe, A.}
\newblock Calculating event-triggered average synaptic conductances from the
  membrane potential.
\newblock {\em Journal of Neurophysiology 97}, 3 (2007), 2544--2552.

\bibitem{rall1977}
{\sc Rall, W.}
\newblock Core conductor theory and cable properties of neurons.
\newblock {\em Comprehensive Physiology 1\/} (1977), 39--97.

\bibitem{rall1992-1}
{\sc Rall, W., Burke, R., Holmes, W., Jack, J., Redman, S., and Segev, I.}
\newblock Matching dendritic neuron models to experimental data.
\newblock {\em Physiological Reviews 72}, 4 (1992), S159--S186.

\bibitem{destexhe2004}
{\sc Rudolph, M., Piwkowska, Z., Badoual, M., Bal, T., and Destexhe, A.}
\newblock A method to estimate synaptic conductances from membrane potential
  fluctuations.
\newblock {\em Journal of neurophysiology 91}, 6 (2004), 2884--2896.

\bibitem{tadi2002}
{\sc Tadi, M., Klibanov, M.~V., and Cai, W.}
\newblock An inversion method for parabolic equations based on
  quasireversibility.
\newblock {\em Computers \& Mathematics with Applications 43}, 8 (2002),
  927--941.

\bibitem{tuikina2017}
{\sc Tuikina, S., and Solov'eva, S.}
\newblock Numerical method of determining the excitation source for the
  fitzhugh--nagumo mathematical model.
\newblock {\em Computational Mathematics and Modeling 28}, 3 (2017), 301--309.

\bibitem{mandujano2018}
{\sc Valle, J. A.~M., Madureira, A.~L., and Leit{\~a}o, A.}
\newblock A computational approach for the inverse problem of neuronal
  conductances determination.
\newblock {\em arXiv preprint arXiv:1810.05887\/} (2018).

\bibitem{vich2017}
{\sc Vich, C., Berg, R.~W., Guillamon, A., and Ditlevsen, S.}
\newblock Estimation of synaptic conductances in presence of nonlinear effects
  caused by subthreshold ionic currents.
\newblock {\em Frontiers in computational neuroscience 11\/} (2017), 69.

\end{thebibliography}

\end{document}